\documentclass[12pt]{article}
\usepackage{amsmath}
\usepackage{latexsym}
\usepackage{amssymb}
%
%
\newtheorem{thm}{Theorem}[section]
\newtheorem{la}[thm]{Lemma}
\newtheorem{Defn}[thm]{Definition}
\newtheorem{Exa}[thm]{Example}
\newtheorem{Remark}[thm]{Remark}
\newtheorem{prop}[thm]{Proposition}

\newtheorem{Number}[thm]{\!\!}

\newenvironment{exa}{\begin{Exa}\rm}{\end{Exa}}
\newenvironment{rem}{\begin{Remark}\rm}{\end{Remark}}
\newenvironment{numba}{\begin{Number}\rm}{\end{Number}}
\newenvironment{proof}{{\noindent\bf Proof.}}%
                  {\nopagebreak\hspace*{\fill}$\Box$\medskip\par}
\newcommand{\Punkt}{\nopagebreak\hspace*{\fill}$\Box$}
\newcommand{\wb}{\overline}

\newcommand{\mto}{\mapsto}
\newcommand{\N}{{\mathbb N}}
\newcommand{\R}{{\mathbb R}}
\newcommand{\C}{{\mathbb C}}
\newcommand{\K}{{\mathbb K}}
\newcommand{\Q}{{\mathbb Q}}
\newcommand{\bS}{{\mathbb S}}
\newcommand{\cg}{{\mathfrak g}}

\newcommand{\cL}{{\mathcal L}}
\newcommand{\cO}{{\mathcal O}}
\newcommand{\cT}{{\mathcal T}}
\newcommand{\cU}{{\mathcal U}}
\DeclareMathOperator{\Aut}{Aut}
\newcommand{\sub}{\subseteq}

\DeclareMathOperator{\GL}{GL}
\DeclareMathOperator{\SL}{SL}
\DeclareMathOperator{\id}{id}

\DeclareMathOperator{\Diff}{Diff}
\DeclareMathOperator{\Lip}{Lip}

\DeclareMathOperator{\Supp}{supp}
\DeclareMathOperator{\Gau}{Gau}
\newcommand{\pl}{{\displaystyle \lim_{\longleftarrow}\, }}
\newcommand{\dl}{{\displaystyle \lim_{\longrightarrow}\, }}
\begin{document}
\begin{center}
{\Large\bf
Completeness of infinite-dimensional\\[2mm]
Lie groups in their left uniformity}\\[7mm]
{\bf Helge Gl\"{o}ckner}\vspace{2mm}
\end{center}
\begin{abstract}
\hspace*{-6mm}We prove completeness for the main examples
of infinite-dimensional Lie groups and some related topological groups.
Consider a sequence
$G_1\sub G_2\sub\cdots$ of topological groups~$G_n$
such that~$G_n$ is a subgroup of $G_{n+1}$ and the latter induces the given topology on~$G_n$,
for each $n\in\N$.
Let $G$ be the direct limit of the sequence in the category of topological groups.
We show that $G$ induces the given topology on each~$G_n$ whenever
$\bigcup_{n\in \N}V_1V_2\cdots V_n$ is an identity neighbourhood in~$G$
for all identity neighbourhoods $V_n\sub G_n$. If, moreover,
each $G_n$ is complete, then~$G$ is complete.
We also show that the weak direct product $\bigoplus_{j\in J}G_j$
is complete for
each family $(G_j)_{j\in J}$ of complete Lie groups~$G_j$.
As a consequence, every strict direct limit $G=\bigcup_{n\in \N}G_n$ of finite-dimensional
Lie groups is complete, as well as the diffeomorphism group $\Diff_c(M)$
of a paracompact finite-dimensional smooth manifold~$M$
and the test function group $C^k_c(M,H)$, for each $k\in\N_0\cup\{\infty\}$
and complete Lie group~$H$
modelled on a complete locally convex space.
\end{abstract}
{\bf Classification:}
22E65 (primary);
22A05, 
22E67, 
46A13, 
46M40, 
58D05
\\[2.3mm]
{\bf Key words:} infinite-dimensional Lie group; direct sum; box product; weak direct product;
(LB)-space; inductive limit; direct limit; ascending sequence; product set;
bamboo shoot topology; compact support; test function group; diffeomorphism group; Banach-Lie group;
left uniform structure; Cauchy net; Cauchy filter; strong (ILB)-Lie group;
projective limit; inverse limit\\[9mm]
{\Large\bf Introduction and statement of the main results}\\[4mm]
Our main goal is to study completeness for Lie groups modelled on locally convex spaces
(in the sense of~\cite{RES}, \cite{GaN}, \cite{Nee},
cf.\ also \cite{Ham}, \cite{Mic}, \cite{Mil}),
and more generally completeness of topological groups,
as far as this is useful for the main goal.
Here completeness refers to the left uniform structure on~$G$ (see, e.g.,  \cite{HaR} for the latter).
The topological groups under consideration need not be Hausdorff
(unless we say so explicitly).
It is well-known that every Lie group~$G$ modelled on a Banach space~$E$
is complete (see Proposition~1 in \cite[Chapter~III, \S1.1]{Bou}),
as the left uniform structure and the one induced by the
additive group of the Banach space coincide on some
identity neighbourhood $U\sub G$ which is homeomorphic to a closed
$0$-neighbourhood $V\sub E$.\\[2.3mm]
Projective limits of complete Hausdorff groups
being complete, this implies that many Fr\'{e}chet-Lie groups\footnote{As usual, Lie groups modelled on Fr\'{e}chet
spaces (resp., Banach spaces) are called Fr\'{e}chet-Lie groups (resp., Banach-Lie groups)
in the following.}
are complete, e.g.\ the mapping groups
\[
C^\infty(M,H)=\pl C^k(M,H)\vspace{-1mm}
\]
which are the projective limit of the Banach-Lie groups $C^k(M,H)$ for $k\in\N_0$,
for each compact smooth manifold~$M$ and Banach-Lie group~$H$.
Of course, completeness properties of locally convex spaces~$E$
(which furnish examples of abelian Lie groups $(E,+)$) are a standard topic in functional
analysis. 
Moreover, completeness properties of topological groups have been studied
(see \cite{RaD} and the references therein, also \cite{AaT}). 
However, no systematic study of completeness properties
of infinite-dimensional Lie groups is available so far.
For example, it is an open question whether completeness
of the modelling space implies completeness for a Lie group (see Problem~II.9 in the survey~\cite{Nee}).
The current article strives to develop specific tools
which enable completeness to be shown for important classes of infinite-dimensional Lie groups,
under natural hypotheses.
Our main results, Theorems~A and~B, are devoted to
completeness properties of direct limits.
Recall that an ascending sequence
\[
G_1\sub G_2\sub\cdots
\]
of topological groups $(G_n,\cO_n)$ is a \emph{direct sequence of topological groups}
if, for each $n\in\N$, the inclusion map $j_{n+1,n}\colon (G_n,\cO_n)\to (G_{n+1},\cO_{n+1})$ is a continuous
homomorphism.
If, moreover, each $j_{n+1,n}$ is a topological embedding (i.e., a homeomorphism onto its image),
then the direct sequence is called \emph{strict}.
Give $G:=\bigcup_{n\in\N}G_n$
the unique group structure for which each inclusion map $j_n\colon G_n\to G$ is a group homomorphism.
There is a finest group topology $\cO_{TG}$ on~$G$ making each~$j_n$ continuous;
$(G,\cO_{TG})$ is called the \emph{direct limit topological group}.\footnote{A topology on a group $G$ is
called a \emph{group topology} if it makes $G$ a topological group.}
The topology~$\cO_{TG}$ must not be confused with the final topology~$\cO_{DL}$ on~$G$
with respect to the inclusion maps~$j_n$, which makes $(G,\cO_{DL})$
the direct limit topological space. It is clear that $\cO_{TG}\sub\cO_{DL}$,
but examples show that equality need not hold (see, e.g., \cite{Tat}
and~\cite{Yam}).\\[2.3mm]
Let $\cO$ be a group topology on~$G$ making each~$j_n$ continuous.
Following~\cite{JFA},
we say that \emph{product sets are large} in $(G,\cO)$ if\hspace*{.3mm}\footnote{In \cite{Bk3},
$(G,\cO)$ is then said to \emph{carry the strong topology}.}
\begin{equation}\label{prodsetdef}
\bigcup_{n\in\N}V_1V_2\cdots V_n
\end{equation}
is an identity neighbourhood in $(G,\cO)$
for all identity neighbourhoods~$V_n$ in $(G_n,\cO_n)$.
If product sets are large in $(G,\cO)$, then $\cO=\cO_{TG}$ (see \cite[Proposition~11.8]{JFA}),
and moreover the product sets as in~(\ref{prodsetdef})
form a basis of identity neighbourhoods for $(G,\cO)$ (as we recall in Lemma~\ref{prodsets-basis}).
We mention that sets of the form
\[
\bigcup_{n\in\N} (V_n\cdots V_2V_1)(V_1V_2\cdots V_n)
\]
(so-called ``bamboo shoots") were already used in~\cite{Tat} and~\cite{Hir}
to obtain tangible descriptions of the topology $\cO_{TG}$
in well-behaved situations.
Our main result can be formulated as follows:\\[4mm]
{\bf Theorem A}
\emph{Let $G_1\sub G_2\sub\cdots$ be a strict direct sequence of topological groups
and $G:=\bigcup_{n\in\N}G_n$ be its direct limit topological group.}
\begin{itemize}
\item[(a)]
\emph{If product sets are large in~$G$,
then each inclusion map $G_n\to G$ is a topological embedding.}
\item[(b)]
\emph{If product sets are large in~$G$ and each $G_n$
is complete, then also $G$ is complete.}
\end{itemize} 
We mention that $(G,\cO_{TG})$ is Hausdorff if each $G_n$ is Hausdorff
and the inclusion map $G_n\to G$ a topological embedding.\footnote{If $e\not=x\in G_n$,
there is an open identity neighbourhood $V\sub G_n$ with $x\not\in V$.
Let $W\sub G$ be an open identity neighbourhood such that $W\cap G_n=V$.
Then $x\not\in W$. Hence $\wb{\{e\}}=\{e\}$ in~$G$ and thus $G$ is Hausdorff.} \\[2.3mm]
Theorem~A and its proof were inspired by Bourbaki's discussion
of completeness for strict direct limits of complete locally convex spaces
(see Propo\-sition~9 in \cite[Ch.\,II, \S4, no.\,6]{TVS}).\\[2.3mm]
Now consider a family $(G_j)_{j\in J}$ of Lie groups~$G_j$ modelled on locally convex spaces~$E_j$.
Then the so-called \emph{weak direct product}
\[
G:=\bigoplus_{j\in J}G_j:=\Big\{(x_j)_{j\in J}\in\prod_{j\in J} G_j\colon \,\mbox{$x_j=e$ for almost all~$j$}\big\}\vspace{-1mm}
\]
can be made a Lie group modelled on the locally convex direct sum $E:=\bigoplus_{j\in J}E_j$
in such a way that for some $C^\infty$-diffeomorphisms $\phi_j\colon U_j\to V_j$
from an open identity neighbourhood $U_j\sub G_j$ onto an open $0$-neighbourhood $V_j\sub E_j$
with $\phi_j(e)=0$, the set
\[
\bigoplus_{j\in J}U_j:=G\cap \prod_{j\in J} U_j\vspace{-1mm}
\]
is an open identity neighbourhood in~$G$ and the map
\[
\oplus_{j\in J} \phi_j\colon \bigoplus_{j\in J} U_j\to\bigoplus_{j\in J}V_j\sub E,\;\;
(x_j)_{j\in J}\mto (\phi_j(x_j))_{j\in J}\vspace{-1mm}
\]
is a $C^\infty$-diffeomorphism (see \cite{MEA}).
If~$J$ is countable, then the topological group underlying the weak direct product $\bigoplus_{j\in J}G_j$
is the small box product
of the topological groups $G_j$ (as in \cite{Bk1}).
If~$J$ is uncountable, then the weak direct product
and the small box product still coincide as groups,
but the box topology is coarser and can be properly coarser. For example,
this happens for the family $(\R)_{j\in J}$ for an uncountable set~$J$.
The weak direct product
\[
\R^{( J )}:=\bigoplus_{j\in J}\R\vspace{-1mm}
\]
then coincides with the locally convex direct sum, whose topology differs from the box topology,\footnote{In fact,
$\{(x_j)_{r\in J}\in\R^{( J )}\colon \sum_{j\in J}|x_j|<1\}$ is a $0$-neighbourhood in the
locally convex direct sum which cannot contain any box $\bigoplus_{j\in J}]{-q_j},q_j[$
with $q_j  \in \; ]0,\infty[\, \cap\, \Q=:C$ as one of the sets $J_q:=\{j\in J\colon q_j=q\}$
with $q \in C$ must be uncountable and hence infinite.}
as is well known (see, e.g., \cite{Wae}).
We show:\\[4mm]
{\bf Theorem B.}
\emph{Let $(G_j)_{j\in J}$ be a family of Lie groups $G_j$
modelled on locally convex spaces.
If each $G_j$ is complete} (\emph{resp., sequentially complete}),
\emph{then also the weak direct product $\bigoplus_{j\in J} G_j$
is complete} (\emph{resp., sequentially complete}).\\[4mm]
Similarly, one finds that the small box product of each family
of complete (resp., sequentially complete)
topological groups is complete (resp.,
sequentially complete), see Example~\ref{box-complete}.\\[4mm]
We now explain how the main results (and further findings)
can be used to establish completeness for infinite-dimensional Lie groups
within the main classes of examples (as listed in~\cite[pp.\,3-4]{Nee}),
and related topological groups.\\[2.5mm]
{\bf Direct limits of finite-dimensional Lie groups.}
If $G_1\sub G_2\sub\cdots$ is a direct sequence of topological groups
and the direct limit topology $\cO_{DL}$ on $G=\bigcup_{n\in\N}G_n$
makes~$G$ a topological group (i.e., if $\cO_{TG}=\cO_{DL}$),
then
product sets are large in $(G,\cO_{TG})$ (see \cite[Proposition 11.3]{FUN}).
Thus
Theorem~A entails:\\[2.3mm]
\emph{If $\cO_{TG}=\cO_{DL}$ on $G=\bigcup_{n\in\N}G_n$
for a strict direct sequence $G_1\sub G_2\sub\cdots$ of complete
topological groups, then $(G,\cO_{TG})$ is complete.}\\[2.3mm]
We mention that $\cO_{TG}=\cO_{DL}$ on $G=\bigcup_{n\in\N}G_n$
for each direct sequence $G_1\sub G_2\sub\cdots$
of locally compact Hausdorff topological groups (see \cite{Tat} and \cite{Hir}).
Hence every strict direct limit $G=\bigcup_{n\in\N} G_n$
of locally compact Hausdorff topological groups $G_1\sub G_2\sub \cdots$ is complete.
In particular, the Lie groups $\dl G_n$\vspace{-.9mm}
(as in~\cite{DL})
are complete for each strict direct sequence $G_1\sub G_2\sub\cdots$ of finite-dimensional
Lie groups.\footnote{The Lie group structure on important
examples of such groups (like $\GL_\infty(\R)=\dl \GL_n(\R)$
and $\SL_\infty(\R)=\dl \SL_n(\R)$) was already constructed in~\cite{KaM};
cf.\ also \cite{NRW}.}\\[3mm]
{\bf Diffeomorphism groups.}
For $M$ a paracompact finite-dimensional smooth manifold, consider the group
$\Diff_c(M)$ of all $C^\infty$-diffeomorphisms $\phi\colon M\to M$ with compact support (in the sense that
$\phi(x)=x$ for $x$ outside some compact set).
Then $\Diff_c(M)$ is a Lie group modelled on the space of smooth compactly supported
vector fields on~$M$ (and $\Diff(M)$ can be made a Lie group
with $\Diff_c(M)$ as an open normal subgroup), see \cite{Mic}
(or also \cite{PAT} and \cite{GaN} if~$M$ is $\sigma$-compact).
For each compact subset $K\sub M$,
\[
\Diff_K(M):=\{\phi\in \Diff_c(M)\colon( \forall x\in M\setminus K)\; \phi(x)=x\}
\]
is a Lie subgroup of $\Diff_c(M)$, modelled on the Fr\'{e}chet space
of all smooth vector fields supported in~$K$.
If $M$ is $\sigma$-compact and $K_1\sub K_2\sub\cdots$ an exhaustion of~$M$
by compact sets,\footnote{Thus $M=\bigcup_{n\in\N} K_n$ and $K_n\sub K_{n+1}^0$ for each $n\in\N$.} then
\begin{equation}\label{diffeoseq}
\Diff_{K_1}(M)\sub\Diff_{K_2}(M)\sub\cdots
\end{equation}
is a strict direct sequence of Lie groups.
By~\cite[Example 11.7]{JFA}, the map
\[
\pi\colon \bigoplus_{n\in\N} \Diff_{K_n}(M)\to\Diff_c(M)\vspace{-2mm}
\]
taking $(\phi_1,\ldots,\phi_n,\id_M,\id_M,\ldots)$ to $\phi_1\circ\cdots\circ\phi_n$
admits a smooth local section around~$\id_M$ (in the spirit of fragmentation
techniques familiar in the theory of diffeomorphism groups, cf.\ \cite{Bny}).
By \cite[Remark 11.5 and Proposition 11.8]{JFA}, this implies that product sets are large in~$\Diff_c(M)$
and $\Diff_c(M)$ is the direct limit topological group of~(\ref{diffeoseq}),
as recorded in \cite[Proposition 5.4]{JFA}
(see also Remark~1 and Proposition~1 in \cite{Bk4} for these arguments).\footnote{We mention that the topology on the Lie group $\Diff_c(M)$
coincides with the Whitney $C^\infty$-topology
used in~\cite{Bk4}; this is clear from the description
of this topology in~\cite{Ill} (see also \cite{HaS}
for a detailed account).}\\[2.3mm]
Now $\Diff_{K_n}(M)$ is a
strong (ILB)-Lie group (as considered in~\cite{Omo}) for each $n\in\N$. Using that strong (ILB)-Lie groups 
are complete (see Proposition~\ref{henceilb} and Remark~\ref{ilb2}\,(a)),
Theorem~A implies that
$\Diff_c(M)$ is complete for $\sigma$-compact~$M$
(see Remark~\ref{ilb2} (b) and (c) for details).\\[2.3mm]
If~$M$ is merely paracompact and $(M_j)_{j\in J}$
its family of connected components, then~$\Diff_c(M)$
has an open subgroup~$G$ which is isomorphic to
the weak direct product
$\bigoplus_{j\in J}\Diff_c(M_j)$
as a Lie group, and we deduce with Theorem~B that $G$ (and hence $\Diff_c(M)$)
is complete also for paracompact~$M$.\\[2.3mm]
The completeness of diffeomorphism groups contrasts the
incompleteness of many groups of homeomorphisms,
among which one finds typical examples of metrizable topological
groups which cannot be completed as the sets of Cauchy sequences
for the left and right uniformity do not coincide (see~\cite{Die}).\\[3mm]
{\bf Mapping groups and gauge groups.}
Among the prime examples of infinite-dimensional
Lie groups are the Lie groups $C^k(M,H)$ of $C^k$-maps
from a compact manifold~$M$ to a Lie group~$H$ for $k\in\N_0\cup\{\infty\}$
(notably the \emph{loop groups} with $M=\bS^1$ the circle
group \cite{PaS}), see \cite{Mil} and \cite{Omo}.
More generally,
if $M$ is a paracompact finite-dimensional smooth manifold
and~$H$ a Lie group modelled on a locally convex space~$E$,
there is a natural Lie group structure on the group $C^k_c(M,H)$
of all $C^k$-maps $\gamma \colon M\to H$ whose support
\[
\Supp(\gamma):=\overline{\{x\in M\colon \gamma(x)\not=e\}}
\]
is compact (where $e$ is the neutral element of~$H$),
which is modelled on the locally convex space $C^k_c(M,E)$;
see \cite{FUN} (also \cite{Alb}) if $M$ is $\sigma$-compact;
in the general case, we let $(M_j)_{j\in J}$ be the family of connected components
of~$M$ and use the group isomorphism
\[
C^k_c(M,H)\to\bigoplus_{j\in J}C^k_c(M_j,H),\quad
\gamma\mto(\gamma|_{M_j})_{j\in j}\vspace{-1mm}
\]
to transport the Lie group structure of the weak direct product
to the left hand side.
Using Theorems~A and~B, we shall see that $C^k_c(M,H)$ is complete
whenever $H$ and its modelling space~$E$ are complete
(Proposition~\ref{mapscomplete}).
Likewise, gauge groups
and full symmetry groups of principal bundles
(as considered in \cite{Sch} and~\cite{Woc})
are complete if the structure group~$H$ and its
modelling space are complete (see Remark~\ref{alsogauge}
for more details).\\[3mm]
{\bf Linear Lie groups.}
We can also prove completeness for some unit groups
of topological algebras.\footnote{If $A$ is a unital algebra,
we write $A^\times$ for its \emph{unit group} of all invertible elements.}
Consider an ascending sequence $A_1\sub A_2\sub\cdots$
of unital Banach algebras, such that all inclusion maps $A_n\to A_{n+1}$
are continuous homomorphisms of unital algebras.
Endow $A:=\bigcup_{n\in\N}A_n$ with the unital algebra structure
turning each inclusion map $A_n\to A$ into a homomorphism of unital algebras.
Then the locally convex direct limit topology makes~$A$ a topological
algebra and product sets are large in $A^\times=\bigcup_{n\in\N}A_n^\times$
(see \cite[Proposition~12.1 (a) and (c)]{JFA}).\footnote{Compare also \cite{DaW}
for general information on such algebras.}
With Theorem~A, we deduce that $A^\times$ (like each $A_n^\times$)
is complete whenever the direct sequence $A_1\sub A_2\sub\cdots$ is strict.\\[3mm]
{\bf Ascending unions of Banach-Lie groups.}
Beyond unit groups of Banach algebras,
let us consider an ascending sequence
$G_1\sub G_2\sub\cdots$ of Banach-Lie groups over $\K\in\{\R,\C\}$
such that each inclusion map $G_n\to G_{n+1}$ is a $\K$-analytic group
homomorphism. In \cite[Theorem~C]{Da1},
conditions were spelled out which ensure that $G=\bigcup_{n\in\N}G_n$
can be made a $\K$-analytic Lie group modelled on the locally convex direct limit
of the respective Lie algebras.
We show that product sets are large in the Lie groups~$G$
constructed in loc.\ cit., whence the given topology on~$G$ 
makes it the direct limit topological group $G=\dl G_n$ (see Proposition~\ref{arelarge}).\vspace{-.8mm}
Using our Theorem~A, we deduce that~$G=\bigcup_{n\in\N}G_n$ (as before)
is complete whenever the direct sequence
$G_1\sub G_2\sub\cdots$ is strict (see Proposition~\ref{arelarge}).\\[2.3mm]
For a concrete example, let $(F_n,\|.\|_n)_{n\in\N}$
be a sequence of Banach spaces. Write $\cL(F)$
for the Banach algebra of bounded operators
$S\colon F\to F$ for a Banach space~$F$, endowed with the operator norm~$\|.\|_{op}$.
We equip $E_n:=F_1\oplus\cdots\oplus F_n$
with the maximum norm and identify
\[
\GL(E_n):=\cL(E_n)^\times
\]
with the subgroup $\GL(E_n)\times\{\id_{F_{n+1}}\}$ of $\GL(E_{n+1})$.
Using \cite[Theorem~A]{Dah},
it was shown in \cite[Theorem~39]{CaP}
that
\[
\GL((F_n)_{n\in\N}):=\bigcup_{n\in\N}\GL(E_n)\vspace{-1mm}
\]
can be made a Lie group.\footnote{In loc.\ cit., $\GL((F_n)_{n\in\N})$ is denoted by $\GL(E)$,
with $E:=\bigcup_{n\in\N}E_n$.} As the direct sequence $\GL(E_1)\sub \GL(E_2)\sub\cdots$
is strict, the preceding reasoning shows that $\GL((F_n)_{n\in\N})$ is complete.\\[2.3mm]
{\bf Outlook: Lie groups modelled on Silva spaces.}
Work by Hunt and Morris~\cite{HaM}
implies that every Lie group modelled on a Silva space\footnote{A locally convex space is called a \emph{Silva space}
or (DFS)-space if it is a locally convex direct limit $\dl\, E_n$\vspace{-.7mm}
for an ascending sequence $E_1\sub E_2\sub\cdots$
of Banach spaces, such that all inclusion maps $E_n\to E_{n+1}$
are compact operators.}
is complete (see \cite[Corollary~1.4]{BaG}). This entails that direct limits $\bigcup_{n\in\N}G_n$
of finite-dimensional Lie groups (or locally compact groups) $G_1\sub G_2\sub\cdots$ are always complete
(no matter whether the direct sequence is strict or not).
It also shows that the Lie group $\Diff^\omega(M)$
of real-analytic diffeomorphisms is complete for each compact real-analytic manifold~$M$,
as well as the Lie group $C^\omega(M,H)$ of all real-analytic $H$-valued mappings
on the latter, for each finite-dimensional Lie group~$H$ (see \cite{BaG}).\\[3mm]
{\bf Acknowledgement.}
I am grateful to my former Ph.D.-students
Rafael Dahmen und Boris Walter
who sketched a precursor of Lemma~\ref{sub-complete}
(which feeds into the proof of Theorem~B)
in a seminar talk in~2009.
\section{Preliminaries and notation}\label{sec1}
We write $\N=\{1,2,\ldots\}$ and $\N_0:=\N\cup\{0\}$.
Topological groups and locally convex (real topological vector) spaces
shall not be assumed Hausdorff, unless we say so explicitly.
If $f\colon X\to Y$ is a function between metric spaces $(X,d_X)$ and $(Y, d_Y)$,
we define
\[
\Lip(f):=\sup\left\{\frac{d_Y(f(x),f(y))}{d_X(x,y)}\colon x\not=y\in X\right\}\in
[0,\infty]
\]
and call $f$ \emph{Lipschitz} if $\Lip(f)<\infty$.
If each point $x\in X$ has a neighbourhood $V\sub X$ such that $f|_V\colon V\to Y$
is Lipschitz (with respect to the metric $d_X|_{V\times V}$ induced on~$V$),
then $f$ is called \emph{locally Lipschitz}.
If $(E,\|.\|)$ is a Banach space,
we write $\GL(E)$ for the group of continuous automorphisms of
the vector space~$E$.
For $x\in E$ and $r>0$, we write $B^E_r(x):=\{y\in E\colon \|y-x\|<r\}$
and $\wb{B}^E_r(x):=\{y\in E\colon \|y-x\|\leq r\}$.
If $q$ is a continuous seminorm on a locally convex space~$E$,
we write $\wb{B}^q_r(0):=\{x\in E\colon q(x)\leq r\}$
for $r>0$.
%
\begin{numba}
For our setting of $C^k$-maps, real and complex
analytic mappings between open subsets of locally
convex Hausdorff topological vector spaces, the reader is referred to
\cite{RES}, \cite{Nee}, and \cite{GaN},
where also the corresponding concepts of manifolds
(and Lie groups) modelled on Hausdorff locally convex spaces
are described
(cf.\ also \cite{BGN}, \cite{Ham}, \cite{Mic}, and \cite{Mil}).\footnote{The $C^k$-maps
are those of Keller's $C^k_c$-theory~\cite{Kel}, going back to A. Bastiani \cite{Bas}.}
All of these manifolds (and Lie groups)
are Hausdorff.
For $C^k$-manifolds with boundary modelled on Hausdorff locally
convex spaces, see~\cite{GaN}.
Every Lie group~$G$ is a topological group
in its given topology (as smooth maps are continuous
in the infinite-dimensional calculus we are using).
We write $e$ for the neutral element of~$G$
and $L(G):=T_e(G)$ (or simply $\cg$)
for its Lie algebra (the tangent space at~$e$). We write $\exp_G\colon L(G)\to G$
for the exponential function of~$G$ (if it exists), as in~\cite{Nee}
or~\cite{GaN} (cf.\ also \cite{Mil}).
If $f\colon G\to H$
is a smooth group homomorphism between
Lie groups, we abbreviate $L(f):=T_e(f)$.
If $M$ is a $C^k$-manifold with $k\geq 1$,
we let $\pi_{TM}\colon TM\to M$ be the bundle projection.
If $E$ is a locally convex space and $U\sub E$ an open subset,
we identify the tangent bundle $TU$ with $U\times E$ in the usual way.
If $M$ is a $C^1$-manifold and $f\colon M\to U$ a $C^1$-map,
we write $df\colon TM\to E$ for the second component of
$Tf\colon TM\to TU=U\times E$. If $g\colon U\to F$ is a $C^1$-map to a locally
convex space, we write $g'(x):=dg(x,\cdot)\colon E\to F$
for $x\in U$.
\end{numba}
The following fact is based on the
Mean Value Theorem.
\begin{numba}\label{C1lip}
(cf.\ \cite[Lemma~2.3.16]{GaN}).
Let $(E,\|.\|_E)$ and $(F,\|.\|_F)$ be normed spaces,
$U\sub E$ be open and $f\colon U\to F$
be a $C^1$-map. Then the following holds:
\begin{itemize}
\item[\rm(a)]
$f$ is locally Lipschitz.
\item[\rm(b)]
If $U$ is convex, then $f$ is Lipschitz if and only
if $L:=\sup_{x\in U}\|f'(x)\|_{op}<\infty$,
in which case $\Lip(f)=L$.
\end{itemize}
\end{numba}
Recall that a net $(x_\alpha)_{\alpha\in A}$ in a topological
group~$G$ is called a left Cauchy net if, for each identity neighbourhood $U\sub G$,
there exists $\gamma\in A$ such that
\[
x_\beta^{-1}x_\alpha \in U\quad\mbox{for all $\,\alpha,\beta\geq \gamma$.}
\]
If every left Cauchy net in~$G$ is convergent, then $G$ is called \emph{complete};
if every left Cauchy sequence in~$G$ is convergent, then $G$ is
\emph{sequentially complete}.
\begin{numba}\label{factscompl}
Many results concerning completeness of topological groups can be found in~\cite{RaD}.
We mention useful facts:
\begin{itemize}
\item[(a)]
If a topological group~$G$ is complete (resp., sequentially complete),
then every closed subgroup $H\sub G$ is complete in the induced topology.
\item[(b)]
For every family $(G_j)_{j\in J}$ of topological groups which are complete (resp.,
sequentially complete), the direct product $\prod_{j\in J} G_j$
is complete (resp., sequentially complete) in the product topology.
\item[(c)]
Let $((G_j)_{j\in J},(q_{i,j})_{i\leq j})$ be a projective system
of Hausdorff topological groups $G_j$ and continuous homomorphisms $q_{i,j}\colon G_j\to G_i$
for $i\leq j$ in~$J$ such that $q_{i,i}=\id_{G_i}$ and $q_{i,j}\circ q_{j,k}=q_{i,k}$
whenever $i\leq j\leq k$.
If each $G_i$ is complete (resp., sequentially complete),
then also the projective limit $\pl \, G_j$ is complete (resp., sequentially complete),
as it can be realized as the closed subgroup
\[
\Big\{(x_j)_{j\in J}\in \prod_{j\in J} G_j\colon (\forall i\leq j)\; x_i=q_{i,j}(x_j)\Big\}\vspace{-1mm}
\]
of the direct product, endowed with the induced topology.
\item[(d)]
Completeness is an extension property: If $G$ is a topological
group and $N\sub G$ a normal subgroup such that both~$N$ and $G/N$
are complete, then also~$G$ is complete (cf.\ \cite[Theorem~12.3\,(a)]{RaD}).
\end{itemize}
If $G$ is a topological group, $H\sub G$ a subgroup and $N\sub G$ a normal subgroup,
we say that $G$ is the (internal) \emph{semidirect product} of $N$ and $H$
as a topological group if the product map
$N\times H\to G$, $(x,y)\mto xy$
is a homeomorphism. Since $q\colon G\to H$, $xy\mto y$ is a quotient homomorphism
with kernel~$N$, the following holds as special case of~(d):
\begin{itemize}
\item[(e)]
Let $G$ be a topological group which, as a topological group,
is the internal semidirect product of a normal
subgroup $N$ and a subgroup~$H$. If~$N$ and~$H$ are complete,
then also~$G$ is complete (\cite[Proposition~12.5\,(a)]{RaD}).
\end{itemize}
\end{numba}
The following slight generalization of \ref{factscompl}\,(c)
is useful.
\begin{la}\label{slightgen}
Let $G$ be a topological group whose underlying topological space is the projective limit
of a projective system $((X_j)_{j\in J},(q_{i,j})_{i\leq j})$
of Hausdorff topological spaces~$X_j$, with limit maps $q_j\colon G\to X_j$
for $j\in J$. Assume that for each Cauchy net $(x_\alpha)_{\alpha\in A}$ in~$G$,
the corresponding net $(q_j(x_\alpha))_{\alpha\in A}$ converges in~$X_j$,
for each $j\in J$. Then $G$ is complete.
\end{la}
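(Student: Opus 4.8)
The plan is to take an arbitrary left Cauchy net $(x_\alpha)_{\alpha\in A}$ in $G$ and produce a limit for it in $G$. By hypothesis, for each $j\in J$ the net $(q_j(x_\alpha))_{\alpha\in A}$ converges in $X_j$; since $X_j$ is Hausdorff, its limit is unique, call it $y_j\in X_j$. The first key step is to check that the family $(y_j)_{j\in J}$ is compatible with the projective system, i.e.\ that $q_{i,j}(y_j)=y_i$ whenever $i\leq j$. This is immediate: $q_{i,j}$ is continuous and $q_{i,j}\circ q_j = q_i$, so $q_{i,j}(q_j(x_\alpha))=q_i(x_\alpha)\to y_i$, while also $q_{i,j}(q_j(x_\alpha))\to q_{i,j}(y_j)$, and uniqueness of limits in the Hausdorff space $X_i$ forces $q_{i,j}(y_j)=y_i$. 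Hence $(y_j)_{j\in J}$ defines a point $y$ of the projective limit of the $X_j$'s, which by assumption is (homeomorphic to) the underlying topological space of $G$; so $y\in G$.

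The second key step is to verify that $x_\alpha\to y$ in $G$. The topology of $G$ is the initial topology with respect to the maps $q_j$ (that is what ``underlying topological space is the projective limit'' means), so a basis of neighbourhoods of $y$ is given by finite intersections $\bigcap_{k=1}^m q_{j_k}^{-1}(U_k)$ with $U_k$ a neighbourhood of $y_{j_k}$ in $X_{j_k}$. For each $k$, since $q_{j_k}(x_\alpha)\to y_{j_k}$, there is $\alpha_k\in A$ with $q_{j_k}(x_\alpha)\in U_k$ for all $\alpha\geq\alpha_k$; choosing an upper bound $\alpha_0$ of $\alpha_1,\dots,\alpha_m$ in the directed set $A$, we get $x_\alpha\in\bigcap_{k=1}^m q_{j_k}^{-1}(U_k)$ for all $\alpha\geq\alpha_0$. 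Thus $x_\alpha\to y$, and since $(x_\alpha)$ was an arbitrary left Cauchy net, $G$ is complete.

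I do not expect a genuine obstacle here: the statement is essentially the observation from~\ref{factscompl}\,(c) with ``closed subgroup of a product of complete groups'' replaced by ``projective limit of Hausdorff spaces'', and the group structure plays no role beyond supplying the notion of left Cauchy net. The only points requiring a word of care are (i) that Hausdorffness of the $X_j$ is what makes the limits $y_j$ well defined and the compatibility argument valid, and (ii) that the identification of $G$'s topology with the projective-limit (i.e.\ initial) topology is used twice — once to place $y$ inside $G$ and once to turn coordinatewise convergence into convergence in $G$. The same argument also gives the ``sequentially complete'' variant verbatim, replacing nets by sequences throughout, should that be wanted later.
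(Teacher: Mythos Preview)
Your proof is correct and follows essentially the same route as the paper's: both obtain the coordinate limits $y_j$, use Hausdorffness of $X_i$ together with $q_{i,j}\circ q_j=q_i$ to verify compatibility, and then conclude convergence in~$G$ from coordinatewise convergence via the initial (projective-limit) topology. The only difference is cosmetic---the paper works inside the standard realization of the projective limit as a subset of $\prod_{j\in J}X_j$, whereas you phrase the final convergence step in terms of a neighbourhood basis of finite intersections.
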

\begin{proof}
We may assume that
\[
G=\Big\{(x_j)_{j\in J}\in \prod_{j\in J}X_j\colon (\forall i\leq j)\; x_i=q_{i,j}(x_j)\Big\}
\]
and $q_i((x_j)_{j\in J})=x_i$ for all $i\in I$ and $x=(x_j)_{j\in J}\in G$.
For $j\in J$, let $y_j\in X_j$ be the limit of $(q_j(x_\alpha))_{\alpha\in A}$.
For $i,j\in I$ with $i\leq j$, the net
\[
(q_i(x_\alpha))_{\alpha\in A}=(q_{i,j}(q_j(x_\alpha)))_{\alpha\in A}
\]
in~$X_i$ converges both to $y_i$ and $q_{i,j}(y_j)$.
Since~$X_i$ is Hausdorff, $y_i=q_{i,j}(y_j)$ follows.
Hence $y:=(y_j)_{j\in J}\in G$ and as $q_j(x_\alpha)\to y_j=q_j(y)$ for all $j\in J$,
the net $(x_\alpha)_{\alpha\in A}$ converges to~$y$.
\end{proof}
\begin{numba}\label{smallbox}
If $(G_j)_{j\in J}$ is a family of topological
groups, let
\[
\square_{j\in J}G_j\sub\prod_{j\in J} G_j\vspace{-1mm}
\]
be the subgroup of all $(x_j)_{j\in J}\in\prod_{j\in J} G_j$
such that $x_j=e$ for all but finitely many~$j$.
Consider the sets
\[
\square_{j\in J} U_j\; :=\; \prod_{j\in J} U_j\, \cap\,  \square_{j\in J} G_j,
\]
for $(U_j)_{j\in J}$ ranging through the families of open subsets $U_j\sub G_j$
such that $e\in U_j$ for all but finitely many~$j$.
The latter sets form a basis for a topology on $\square_{j\in J} G_j$
making it a topological group (called the \emph{box topology}).
When endowed with this topology, $\square_{j\in J} G_j$ is called the \emph{small box product}
of the family $(G_j)_{j\in J}$
(see \cite{Bk1} for further details).\\[2.3mm]
If each $G_j$ is a Lie group modelled on a Hausdorff locally convex space,
then also $\square_{j\in J}G_j$ is a Lie group
in a natural way (see \cite{ZOO});\footnote{In \cite{ZOO},
small box products are called weak direct products,
in contrast to the conventions in the current article.}
it is modelled on the small box product
$\square_{j\in J}E_j$.
If, instead, we use the locally convex direct sum as the modelling space,
then the group $\square_{j\in J}G_j$ can be made a Lie group
as well, called the \emph{weak direct product} of the family
and denoted by $\bigoplus_{j\in J} G_j$ in this article
(see \cite{MEA}, where the notation $\prod_{j\in J}^*G_j$ is used).
The two possible modelling spaces (and the two Lie groups)
coincide if~$J$ is countable. When dealing with $\bigoplus_{j\in J}G_j$,
we write $\bigoplus_{j\in J}U_j$ instead of $\square_{j\in J}U_j$.
\end{numba}
\section{Deducing completeness from completeness of a subgroup}
The following lemma is essential for the proof of Theorem~A.
\begin{la}\label{zweimal}
Let $(x_\alpha)_{\alpha\in A}$ be a left Cauchy net in a topological group~$G$
and $H\sub G$ be a subgroup which is a complete topological group
in the induced topology.
Assume that, for each $\alpha\in A$ and identity neighbourhood $W\sub G$,
there exists $\beta\geq\alpha$ such that
\begin{equation}\label{convmodu}
x_\beta\in H W.
\end{equation}
Then $(x_\alpha)_{\alpha\in A}$ converges in~$G$, to some~$y\in H$.
\end{la}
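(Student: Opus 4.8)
The plan is to distill from the ``approximate $H$-values'' furnished by~(\ref{convmodu}) an auxiliary net in~$H$, to verify that it is left Cauchy, to obtain its limit $y\in H$ from completeness of~$H$, and then to deduce that $(x_\alpha)_{\alpha\in A}$ itself converges to~$y$. The point to watch is that the element of~$H$ approximating $x_\beta$ depends not only on~$\alpha$ but also on the prescribed neighbourhood~$W$, so the auxiliary net must be indexed by pairs. Concretely, let $\cU$ be the set of all identity neighbourhoods of~$G$, directed by reverse inclusion. For each $(\alpha,W)\in A\times\cU$ I would use~(\ref{convmodu}) to fix some $\beta(\alpha,W)\geq\alpha$ with $x_{\beta(\alpha,W)}\in HW$, and then fix $h_{(\alpha,W)}\in H$ with $h_{(\alpha,W)}^{-1}x_{\beta(\alpha,W)}\in W$. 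Ordering $A\times\cU$ by $(\alpha,W)\leq(\alpha',W')$ if and only if $\alpha\leq\alpha'$ and $W'\sub W$ makes it a directed set (because $A$ is directed and the intersection of two members of~$\cU$ again lies in~$\cU$), so that $(h_{(\alpha,W)})_{(\alpha,W)\in A\times\cU}$ is a net in~$H$.

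Next I would show that this net is left Cauchy. Given $U\in\cU$, choose a symmetric $W_0\in\cU$ with $W_0W_0W_0\sub U$, and---using that $(x_\alpha)_{\alpha\in A}$ is left Cauchy---choose $\gamma\in A$ with $x_\delta^{-1}x_\eta\in W_0$ for all $\delta,\eta\geq\gamma$. If $(\alpha,W),(\alpha',W')\geq(\gamma,W_0)$, then, abbreviating $\beta:=\beta(\alpha,W)$ and $\beta':=\beta(\alpha',W')$ (both $\geq\gamma$) and noting $W,W'\sub W_0$, the factorisation
\[
h_{(\alpha',W')}^{-1}h_{(\alpha,W)}=\bigl(h_{(\alpha',W')}^{-1}x_{\beta'}\bigr)\bigl(x_{\beta'}^{-1}x_{\beta}\bigr)\bigl(x_{\beta}^{-1}h_{(\alpha,W)}\bigr)
\]
exhibits the left-hand side as an element of $W'\,W_0\,W^{-1}\sub W_0W_0W_0\sub U$. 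Since these elements lie in~$H$, the net is left Cauchy in~$H$ for the induced topology, whence by completeness of~$H$ it converges to some $y\in H$.

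It remains to show that $x_\alpha\to y$ in~$G$. Given $U\in\cU$, choose a symmetric $V\in\cU$ with $VVV\sub U$, choose $\gamma\in A$ with $x_\delta^{-1}x_\eta\in V$ for all $\delta,\eta\geq\gamma$, and---using $h_{(\alpha,W)}\to y$---choose $(\alpha_1,W_1)\in A\times\cU$ with $W_1\sub V$ and $y^{-1}h_{(\alpha',W')}\in V$ for all $(\alpha',W')\geq(\alpha_1,W_1)$. Now fix any $\alpha\geq\gamma$ and pick $\alpha'\in A$ with $\alpha'\geq\alpha_1$ and $\alpha'\geq\gamma$; then $\beta':=\beta(\alpha',W_1)\geq\gamma$, and
\[
y^{-1}x_\alpha=\bigl(y^{-1}h_{(\alpha',W_1)}\bigr)\bigl(h_{(\alpha',W_1)}^{-1}x_{\beta'}\bigr)\bigl(x_{\beta'}^{-1}x_\alpha\bigr)\in V\,W_1\,V\sub VVV\sub U,
\]
so that $x_\alpha\in yU$. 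As $\alpha\geq\gamma$ was arbitrary, $(x_\alpha)_{\alpha\in A}$ converges to $y\in H$.

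I expect the only genuine obstacle to be organisational: one must simultaneously shrink the neighbourhood coordinate~$W$ of the auxiliary index (so that the factors $h_{(\cdot,W)}^{-1}x_{\beta(\cdot,W)}$ become small) while pushing the $A$-coordinate past the relevant Cauchy threshold~$\gamma$; once the two-parameter index set and its order are set up correctly, each estimate is a one-line manipulation of identity neighbourhoods in a topological group, and Hausdorffness of~$G$ is nowhere used.
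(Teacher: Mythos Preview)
Your proof is correct and follows essentially the same approach as the paper's: both build an auxiliary net in~$H$ indexed by pairs (an $A$-index together with an identity neighbourhood), verify it is left Cauchy, and use completeness of~$H$ to obtain the limit~$y$. The only cosmetic differences are that the paper restricts to the cofinal set $M=\{(W,\alpha):x_\alpha\in HW\}$ and decomposes $x_\alpha=y_a w_a$ directly (rather than introducing your auxiliary choice $\beta(\alpha,W)$), and then concludes via the convergent subnet $(x_\alpha)_{(W,\alpha)\in M}$ rather than by your direct three-factor estimate for $y^{-1}x_\alpha$.
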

\begin{proof}
Let~$\cU$ be the set of all identity neighborhoods in~$G$.
By hypothesis,
\[
A_W:=\{\alpha\in A\colon x_\alpha\in H W\}
\]
is cofinal in~$A$ for all $W\in\cU$ and thus
$M:=\{(W,\alpha)\in\cU\times A\colon \alpha\in A_W\}$
becomes a directed set if we write $(W_1,\alpha_1)\leq (W_2,\alpha_2)$ if and only if
$W_2\sub W_1$ and $\alpha_1\leq \alpha_2$.
For $a=(W,\alpha)\in M$, pick $y_a\in H $ and $w_a\in W$ such that
\begin{equation}\label{thusconvsu}
x_\alpha=y_a w_a.
\end{equation}
Then $(y_a)_{a\in M}$ is a left Cauchy net in~$H$.
In fact, if $U$ is an identity neighborhood in~$H$, we find $Q\in\cU$ such that $U= H\cap Q$.
Let $P\in\cU$ such that $PPP^{-1}\sub Q$ and $\gamma\in A$ such that
$x_\beta^{-1}x_\alpha \in P$ for all $\alpha,\beta\geq \gamma$.
We may assume that $\gamma\in A_P$. For all $a,b\geq (P,\gamma)$ in~$M$, say $a=(W,\alpha)$ and $b=(V,\beta)$,
we have $V,W\sub P$ and hence
\[
y_b^{-1}y_a= w_b x_\beta^{-1} x_\alpha w_a^{-1} \in V P W^{-1} \sub PPP^{-1}\sub Q.
\]
Thus $y_b^{-1}y_a\in H\cap Q=U$.\\[2.3mm]
Let $y$ be the limit of $(y_a)_{a\in M}$ in~$H$.
Then $y_a\to y$ also in~$G$. Given $W\in\cU$, let $\alpha\in A_W$.
Since $w_a\in W$ for $a\geq (W,\alpha)$, the net $(w_a)_{a\in M}$ converges to~$e$ in~$G$.
Using (\ref{thusconvsu}), we deduce that the subnet
$(x_\alpha)_{(W,\alpha)\in M}$
of $(x_\alpha)_{\alpha\in A}$
(and hence also the Cauchy net $(x_\alpha)_{\alpha\in A}$) converges to~$y$.
\end{proof}
\section{Completeness of strict direct limits}\label{proof-A}
In this section, we prove Theorem~A.
\begin{la}\label{prepare-prod}
Let $G$ be a group, $2\leq n\in\N$ and $G_1\sub G_2\sub \cdots\sub G_n=G$
be subgroups. For $j\in \{1,\ldots, n\}$, let $W_j$ be a subset of~$G_j$.
Then
\begin{equation}\label{redulast}
G_1\cap (W_1 W_2\cdots W_n)=G_1\cap (W_1\cdots W_{n-1}(G_{n-1}\cap W_n)).
\end{equation}
\end{la}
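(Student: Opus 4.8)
The inclusion ``$\supseteq$'' is immediate: since $G_{n-1}\cap W_n\sub W_n$, we have $W_1\cdots W_{n-1}(G_{n-1}\cap W_n)\sub W_1W_2\cdots W_n$, and intersecting both sides with $G_1$ preserves the inclusion. So the content is entirely in the reverse inclusion ``$\sub$''.

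For ``$\sub$'', the plan is to take an element $g\in G_1\cap(W_1W_2\cdots W_n)$ and write $g=w_1w_2\cdots w_n$ with $w_j\in W_j$ for $j\in\{1,\ldots,n\}$. The key observation is that the final factor can be isolated: $w_n=w_{n-1}^{-1}\cdots w_1^{-1}g$. Now $w_j\in W_j\sub G_j\sub G_{n-1}$ for each $j\in\{1,\ldots,n-1\}$ (using the ascending chain and $n\geq 2$), and $g\in G_1\sub G_{n-1}$; since $G_{n-1}$ is a subgroup, it is closed under inverses and products, so $w_n\in G_{n-1}$. Hence $w_n\in G_{n-1}\cap W_n$, and therefore $g=w_1\cdots w_{n-1}w_n\in W_1\cdots W_{n-1}(G_{n-1}\cap W_n)$. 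Combined with $g\in G_1$, this gives $g\in G_1\cap(W_1\cdots W_{n-1}(G_{n-1}\cap W_n))$, completing the argument.

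There is essentially no obstacle here; the only point requiring a moment's care is to record explicitly why $w_1,\ldots,w_{n-1}$ and $g$ all lie in $G_{n-1}$ (this is where the hypothesis $n\geq 2$ and the nested-subgroup structure are used), after which closure of the subgroup $G_{n-1}$ under the group operations does the rest.
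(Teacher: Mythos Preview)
Your proof is correct. The paper proves the same inclusion by induction on~$n$: in the base case $n=2$ one isolates $w_2=w_1^{-1}x\in G_1$, and in the inductive step one peels off the leftmost factor to obtain $w_2\cdots w_n\in G_2\cap(W_2\cdots W_n)$ and then invokes the inductive hypothesis for the shorter chain $G_2\sub\cdots\sub G_n$. Your argument is more direct: you isolate the \emph{rightmost} factor $w_n=(w_1\cdots w_{n-1})^{-1}g$ in one stroke and note that everything on the right already lies in $G_{n-1}$ because $W_j\sub G_j\sub G_{n-1}$ for $j\leq n-1$ and $g\in G_1\sub G_{n-1}$. This bypasses the induction entirely and is a cleaner route to the same conclusion; the paper's inductive framing, on the other hand, makes explicit how the nested structure is used step by step, which mirrors the way the lemma is later applied iteratively in the proof of Theorem~A.
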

\begin{proof}
We show by induction on~$n$ that the left hand side of (\ref{redulast})
is a subset of the right hand side (the other inclusion is trivial),
for all $G$, $G_1,\ldots, G_n$ and $W_1,\ldots,W_n$
as described in the lemma.
If $n=2$ and $x\in G_1\cap W_1W_2$,
then $x=w_1w_2$ with $w_1\in W_1$ and $w_2\in W_2$. Since $W_1\sub G_1$, we have $w_1\in G_1$
and thus $w_2=w_1^{-1}x\in G_1\cap W_2$.

If $n>2$ and the assertion holds for $n-1$, let $x\in G_1\cap (W_1\cdots W_n)$.
Write $x=w_1w_2\cdots w_n$ with $w_j\in W_j$ for $j\in\{1,\ldots, n\}$.
Then $w_2\cdots w_n=w_1^{-1}x\in G_1\cap (W_2\cdots W_n)\sub G_2\cap (W_2\cdots W_n)$
and thus $w_n\in G_{n-1}\cap W_n$, by the inductive hypothesis.
\end{proof}
\begin{la}\label{prodsets-basis}
Assume that $G$ is the direct limit topological group of a direct
sequence $G_1\sub G_2\sub\cdots$ of topological groups
and product sets are large in~$G$.
Then every identity neighbourhood of~$G$ contains a product set
$\bigcup_{n\in\N}W_1W_2\cdots W_n$
for suitable identity neighbourhoods $W_n\sub G_n$.
\end{la}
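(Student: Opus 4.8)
The plan is to reduce the assertion to the standard topological-group fact that a suitable ``telescoping'' chain of identity neighbourhoods multiplies into a prescribed one, and then to transport everything down to the subgroups $G_n$ by mere continuity of the inclusion maps. Concretely, start with an arbitrary identity neighbourhood $O\sub G$. Since $(G,\cO_{TG})$ is a topological group, continuity of multiplication at $(e,e)$ lets me recursively choose identity neighbourhoods $O=V_0\supseteq V_1\supseteq V_2\supseteq\cdots$ in~$G$ with $V_kV_k\sub V_{k-1}$ for all $k\geq 1$ (having picked $V_{k-1}$, first find an identity neighbourhood $V$ with $VV\sub V_{k-1}$, then replace it by $V\cap V_{k-1}$ to keep the chain descending).

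The combinatorial core is to show $V_1V_2\cdots V_n\sub V_0$ for every $n\in\N$. I would prove the slightly stronger statement $V_kV_{k+1}\cdots V_n\sub V_{k-1}$ by downward induction on~$k\in\{1,\dots,n\}$: the base case $k=n$ is $V_n\sub V_nV_n\sub V_{n-1}$ (using $e\in V_n$), and in the inductive step $V_kV_{k+1}\cdots V_n=V_k\,(V_{k+1}\cdots V_n)\sub V_kV_k\sub V_{k-1}$. Taking $k=1$ gives $V_1\cdots V_n\sub V_0=O$.

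Finally I set $W_n:=V_n\cap G_n$. Because $G$ is the direct limit topological group, the inclusion $j_n\colon G_n\to G$ is continuous, so $W_n=j_n^{-1}(V_n)$ is an identity neighbourhood in~$G_n$. Since $W_n\sub V_n$ as subsets of~$G$, we get $W_1W_2\cdots W_n\sub V_1V_2\cdots V_n\sub O$ for each~$n$, hence $\bigcup_{n\in\N}W_1W_2\cdots W_n\sub O$, which is exactly the desired product set inside~$O$. (The hypothesis that product sets are large is then what guarantees this union is itself an identity neighbourhood, so that product sets form a basis; but it is not needed for the containment established here.) There is no real obstacle: the only point worth flagging is that only continuity of the $j_n$ — not the embedding property, which is \emph{not} assumed in this lemma — enters, and that the induction in the middle step must be run downward in~$k$ rather than upward in~$n$.
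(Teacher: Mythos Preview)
Your proof is correct and follows essentially the same route as the paper: choose a telescoping sequence $V_n\sub G$ with $V_nV_n\sub V_{n-1}$, set $W_n:=G_n\cap V_n$, and use $V_1\cdots V_n\sub V_0$ to conclude. You have merely spelled out in more detail the induction behind $V_1\cdots V_n\sub V_0$ and the reason $W_n$ is an identity neighbourhood in~$G_n$; your closing remark that the largeness hypothesis is not needed for the containment itself is also accurate.
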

\begin{proof}
If $V_0$ is an identity neighbourhood in~$G$,
there exist identity neighbourhoods $V_n\sub G$ for $n\in\N$
such that $V_n V_n\sub V_{n-1}$.
Then $W_n:=G_n\cap V_n$ is an identity neighbourhood in~$G_n$ and
$V_1V_2\cdots V_n\sub V_0$ for all $n\in\N$
implies that $\bigcup_{n\in\N}W_1\cdots W_n\sub \bigcup_{n\in\N}V_1\cdots V_n\sub V_0$.
\end{proof}
{\bf Proof of Theorem~A.}
(a) To see that $G$ induces the given topology on~$G_n$, we may assume that $n=1$.
Let $V_1\sub G_1$ be an identity neighbourhood.
There exists an identity neighbourhood $W_1\sub G_1$ such that $W_1W_1\sub V_1$.
Recursively, for $m\geq 2$ find an identity neighbourhood $V_m\sub G_m$
such that
\[
G_{m-1}\cap V_m=W_{m-1}
\]
(which is possible as $G_m$
induces the given topology on~$G_{m-1}$) and an identity neighbourhood $W_m\sub G_m$
such that
$W_mW_m\sub V_m$.
Then
\begin{equation}\label{finite-cas}
G_1\cap (W_1W_2\cdots W_m)\sub G_1\cap (W_1\cdots W_{j-1}V_j)
\end{equation}
for all $j\in\{m,m-1,\ldots,1\}$, by induction:
If $j=m$, we have $W_m\sub V_m$ and the assertion holds.
If $j\in\{2,\ldots, m\}$ and the assertion holds for~$j$,
then
\begin{eqnarray*}
G_1\cap (W_1W_2\cdots W_m) &\sub & G_1\cap (W_1\cdots W_{j-1}V_j)\\
&=&G_1\cap (W_1\cdots W_{j-1}(G_{j-1}\cap V_j))\\
&=& G_1\cap (W_1\cdots W_{j-1}W_{j-1})\\
&\sub&  G_1\cap (W_1\cdots W_{j-2} V_{j-1})
\end{eqnarray*}
using the inductive hypothesis, Lemma~\ref{prepare-prod},
the identity $G_{j-1}\cap V_j=W_{j-1}$
and the inclusion $W_{j-1}W_{j-1}\sub V_{j-1}$.
Taking $j=1$, we deduce that
\[
G_1\cap (W_1W_2\cdots W_m)\sub V_1
\]
for all $m\in \N$ and hence $G_1\cap W\sub V_1$ if we define
\[
W:=\bigcup_{m\in\N} W_1W_2\cdots W_m.\vspace{-1mm}
\]
As we assume that product sets are large in~$G$,
the set $W$ is an identity neighborhood in~$G$. Since $G_1\cap W\sub V_1$,
the group topology $\cT$ induced by $G$ on~$G_1$
is finer than the given topology $\cO_1$ on~$G_1$ and hence coincides with it
(noting that $\cT\sub \cO_1$ as the inclusion map $(G_1,\cO_1)\to G$
is continuous).

(b) If each $G_n$ is complete, let $(x_\alpha)_{\alpha\in A}$
be a left Cauchy net in~$G$. Let~$\cU$ be the set of all identity neighborhoods in~$G$.
We claim that there exists $m \in\N$
such that, for each $\alpha\in A$ and $W\in\cU$,
there exists $\beta\geq\alpha$ such that
\[
x_\beta\in G_m W.
\]
If this is true, then $(x_\alpha)_{\alpha\in A}$ converges in~$G$,
by Lemma~\ref{zweimal}, using that~$G$ induces the given complete group
topology on~$G_m$, by~(a).

To prove the claim, suppose it was wrong.
Then, for each $m\in\N$,
there exist $\alpha_m\in A$ and $W_m\in\cU$
such that
\begin{equation}\label{no-inters}
x_\alpha\not \in  G_m W_m\quad\mbox{for all $\alpha\geq\alpha_m$.}
\end{equation}
After shrinking $W_m$ if necessary, we may assume that
\[
W_m=\bigcup_{n\in\N} W^{(m)}_1\cdots W^{(m)}_n
\]
with identity neighbourhoods $W^{(m)}_n\sub G_n$,
by Lemma~\ref{prodsets-basis}.
After shrinking $W^{(2)}_n\!$, $W^{(3)}_n\!$, $\ldots\;$, we may assume that
\begin{equation}\label{geschachtelt}
W^{(m+1)}_n\sub W^{(m)}_n\quad\mbox{for all $\,n,m\in\N$.}
\end{equation}
Since product sets are large in~$G$, the set
$W:=\bigcup_{n\in\N}W^{(1)}_1\cdots W^{(n)}_n$
is an identity neighbourhood in~$G$.
By (\ref{geschachtelt}), we have
\[
\bigcup_{n>m}W^{(m+1)}_{m+1}\cdots W^{(n)}_n\sub \bigcup_{n>m}W^{(m)}_{m+1}\cdots W^{(m)}_n
\sub\bigcup_{n\in\N}W^{(m)}_1\cdots W^{(m)}_n=W_m.
\]
Using that $W^{(1)}_1\cdots W^{(n)}_n\sub G_m$ for $n\in\{1,\ldots, m\}$, we deduce that
\[
G_m W =G_m\bigcup_{n>m}W^{(m+1)}_{m+1}\cdots W^{(n)}_n\sub G_m W_m
\]
and thus
\begin{equation}\label{allW}
G_m W\sub G_m W_m\quad\mbox{for all $\,m\in \N$.}
\end{equation}
By definition of a Cauchy net, we find $\gamma\in A$ such that
\begin{equation}\label{diff-in}
x_\alpha^{-1}x_\beta\in W
\end{equation}
for all $\alpha,\beta\geq \gamma$. Now $x_\gamma\in G_{m_0}$ for some $m_0\in\N$.
Since $A$ is directed, we find $\alpha\in A$ such that $\alpha\geq \gamma$ and $\alpha\geq\alpha_{m_0}$.
Using (\ref{diff-in}), we obtain
\[
x_\alpha =x_\gamma(x_\gamma^{-1}x_\alpha)\in G_{m_0}W.
\]
But $x_\alpha\not\in G_{m_0} W_{m_0}$ by (\ref{no-inters}) and thus $x_\alpha\not\in G_{m_0}W$ (by (\ref{allW})),
which is absurd.\Punkt
\section{Completeness of weak direct products}\label{proof-B}
The following lemma will enable
us to reduce the completeness of
weak direct products (and box products) to that of direct products.
\begin{la}\label{sub-complete}
Let $P$ be a complete $($resp., sequentially complete$)$ topological group
and $G$ be a subgroup of~$P$, endowed with a topology~$\cO$
which is finer than the topology~$\cT$ induced by~$P$ on~$G$.
Assume that, for each $x\in P$ such that $x\not\in G$, 
there exists a closed subset $L$ in~$P$ such that
\begin{itemize}
\item[\rm(i)]
 $G\cap L $ is an identity neighborhood in~$(G,\cO)$; and
\item[\rm(ii)]
$G\cap xL =\emptyset$.
\end{itemize}
Moreover, assume that
\begin{itemize}
\item[\rm(iii)]
The closures $\wb{V}$ in~$(G,\cT)$ form a basis of identity neighbourhoods in $(G,\cO)$,
for $V$ in the set of identity neighbourhoods in~$(G,\cO)$.
\end{itemize}
Then also $(G,\cO)$ is complete $($resp., sequentially complete$)$.
\end{la}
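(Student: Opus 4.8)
The plan is to start from a left Cauchy net $(x_\alpha)_{\alpha\in A}$ in $(G,\cO)$ (in the sequential case, a left Cauchy sequence) and exhibit an $\cO$-limit lying in~$G$. Since $\cO$ is finer than~$\cT$ and $\cT$ is the topology induced by~$P$, every identity neighbourhood $W$ of~$P$ meets~$G$ in a $\cT$-identity-neighbourhood $W\cap G$, which is then also an $\cO$-identity-neighbourhood; hence the left Cauchy property in $(G,\cO)$ provides $\gamma$ with $x_\beta^{-1}x_\alpha\in W\cap G\sub W$ for all $\alpha,\beta\geq\gamma$. Thus $(x_\alpha)_{\alpha\in A}$ is a left Cauchy net in~$P$ and, $P$ being complete, converges to some $x\in P$. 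It then remains to show that $x$ actually lies in~$G$ and that the convergence persists in the finer topology~$\cO$.

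For $x\in G$ I would argue by contradiction. If $x\notin G$, pick a closed $L\sub P$ as in~(i) and~(ii). Since $G\cap L$ is an $\cO$-identity-neighbourhood, the left Cauchy property gives $\gamma\in A$ with $x_\beta^{-1}x_\alpha\in L$ for all $\alpha,\beta\geq\gamma$. Fixing $\alpha\geq\gamma$, the subnet $(x_\beta)_{\beta\geq\gamma}$ still converges to~$x$ in~$P$, so continuity of inversion and multiplication in~$P$ gives $x_\beta^{-1}x_\alpha\to x^{-1}x_\alpha$ in~$P$; as $L$ is closed and $x_\beta^{-1}x_\alpha\in L$ for $\beta\geq\gamma$, we conclude $x^{-1}x_\alpha\in L$, i.e.\ $x_\alpha\in xL$. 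But $x_\alpha\in G$, so $x_\alpha\in G\cap xL$, contradicting~(ii). Hence $x\in G$, and therefore $x_\alpha\to x$ also in $(G,\cT)$, which is a topological group, being a subgroup of~$P$ with the induced topology.

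Finally I would upgrade $\cT$-convergence to $\cO$-convergence by means of~(iii). Since the closures $\wb V$ in $(G,\cT)$ form a basis of $\cO$-identity-neighbourhoods as $V$ runs over the $\cO$-identity-neighbourhoods, it suffices to show that for each such~$V$ one has $x^{-1}x_\alpha\in\wb V$ eventually. Given~$V$, the left Cauchy property yields $\gamma\in A$ with $x_\beta^{-1}x_\alpha\in V$ for all $\alpha,\beta\geq\gamma$; fixing $\alpha\geq\gamma$ and using that $x_\beta^{-1}x_\alpha\to x^{-1}x_\alpha$ in $(G,\cT)$ (the operations being $\cT$-continuous and $x_\beta\to x$ in $(G,\cT)$), the net $(x_\beta^{-1}x_\alpha)_{\beta\geq\gamma}$ lies in~$V$ and so its limit satisfies $x^{-1}x_\alpha\in\wb V$. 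This holds for all $\alpha\geq\gamma$, whence $x_\alpha\to x$ in $(G,\cO)$, and $(G,\cO)$ is complete. The sequentially complete case runs verbatim with sequences in place of nets, the only point being that topological closures in~(iii) still capture the relevant limits of sequences. The delicate step is the second one: hypotheses (i) and~(ii) are exactly what is needed to force the $P$-limit back into~$G$, while the passage to closures in the last step is unavoidable because $\cO$ may be strictly finer than~$\cT$, so bare $\cT$-convergence would not by itself give $\cO$-convergence.
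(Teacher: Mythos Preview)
Your proof is correct and follows essentially the same approach as the paper's: you pass to a limit in~$P$, use hypotheses~(i) and~(ii) with closedness of~$L$ to force $x\in G$, and then use~(iii) together with the same limit-passing trick to upgrade $\cT$-convergence to $\cO$-convergence. The only cosmetic difference is that you pass to the limit in the variable~$\beta$ while the paper passes to the limit in~$\alpha$, and you phrase the closures as $\cT$-closures in~$G$ while the paper writes them as $G\cap\wb{V}$ with $\wb{V}$ taken in~$P$; since $\cT$ is the subspace topology, these coincide.
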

\begin{proof}
Assume that $P$ is complete.
If $(x_\alpha)_{\alpha\in A}$ is a left Cauchy net in~$(G,\cO)$,
then it also is a left Cauchy net in~$P$, whence
$x_\alpha\to x$ in~$P$
for some $x\in P$. Then $x\in G$, since otherwise we obtain a contradiction:
Let $L\sub P$ be a closed subset such that $G\cap L$ is an identity neighbourhood
in $(G,\cO)$
and $G\cap xL =\emptyset$.
Let $\alpha_0\in A$ such that
$x_\alpha^{-1} x_\beta\in G\cap L$
for all $\alpha,\beta\geq\alpha_0$.
Considering $x_\alpha^{-1}x_\beta$ as elements of~$P$ and passing to the limit in~$\alpha$,
we obtain
$x^{-1}x_\beta\in L$
for all $\beta\geq \alpha_0$, whence $x_\beta\in G\cap xL=\emptyset$, which is absurd.

Let~$W$ be an identity neighborhood in~$(G,\cO)$. By hypothesis~(iii),
we find an identity neighbourhood~$V$ in $(G,\cO)$
such that
$G\cap \wb{V}\sub W$,
where $\wb{V}$ is the closure of~$V$ in~$P$.
There exists $\alpha_0\in A$ such that
$x_\alpha^{-1}x_\beta\in V$ for all $\alpha,\beta\geq\alpha_0$.
Considering $x_\alpha^{-1}x_\beta$ as an element of $P$ and passing to the limit in~$\alpha$,
we deduce that
$x^{-1}x_\beta\in\wb{V}$
for all $\beta\geq\alpha_0$, whence $x^{-1}x_\beta\in G\cap \wb{V}\sub W$
and thus $x_\beta\in xW$. Thus $x_\beta\to x$ in~$(G,\cO)$.

If $P$ is sequentially complete, then the proof is identical
with $A:=\N$.
\end{proof}
\begin{exa}\label{box-complete}
Let $(G_j)_{j\in J}$ be a family of topological groups $G_j$ which are complete
(resp., sequentially complete). Then also the small box product
$G:=\square_{j\in J} G_j$ is complete (resp., sequentially complete).\\[2.3mm]
To see this, let us check Lemma~\ref{sub-complete} can be used
with $P:=\prod_{j\in J}G_j$.
If $x=(x_j)_{j\in J}\in P\setminus G$,
then
\[
I:=\{j\in J\colon x_j\not=e\}
\]
is an infinite set. For each $j\in I$, let $U_j\sub G_j$ be a closed identity neigbourhood
such that $x_j^{-1}\not\in U_j$. For $j\in J\setminus I$, let $U_j:=G_j$.
Then $L:=\prod_{j\in J}U_j$ is closed in~$P$
and $G\cap L=\square_{j\in J}U_j$ is an identity neighbourhood such that $xL\cap G=\emptyset$
as each $y=(y_j)_{j\in J}\in xL$ satisfies $y_j\not=e$ for all $j\in I$.
Finally, let $W\sub G$ be an identity neighbourhood. Then $W$ contains a box
$V:=\square_{j\in J}V_j$ with closed identity neighbourhoods $V_j\sub G_j$.
The closure~$\wb{V}$ of~$V$ in~$P$ is $\prod_{j\in J}V_j$, whence $G\cap \wb{V}=V\sub W$.
\end{exa}
{\bf Proof of Theorem~B.}
To verify the theorem, let us check that Lemma~\ref{sub-complete} can be applied
with $P:=\prod_{j\in J}G_j$.
If $x=(x_j)_{j\in J}\in P\setminus G$,
then
\[
I:=\{j\in J\colon x_j\not=e\}
\]
is an infinite set. For each $j\in I$, let $U_j\sub G_j$ be a closed identity neigbourhood
such that $x_j^{-1}\not\in U_j$. For $j\in J\setminus I$, let $U_j:=G_j$.
Then $L:=\prod_{j\in J}U_j$ is closed in~$P$
and $G\cap L=\square_{j\in J}U_j$ is an identity neighbourhood (as the topology on~$G$
is finer than the box topology)
such that $xL\cap G=\emptyset$.
Thus~$L$ satisfies the conditions~(i) and~(ii)
in Lemma~\ref{sub-complete}.\\[2.3mm]
Next, let $S\sub G$ be an identity neighbourhood.
For $j\in J$, let $E_j$ be the locally convex space on which~$G_j$ is modelled.
Then $S$ contains an identity neighbourhood of the form
$W=\phi^{-1}(Q)$
for a diffeomorphism $\phi\colon U\to V$ and a $0$-neighbourhood $Q\sub V$,
where diffeomorphisms
$\phi_j\colon U_j \to V_j$ from open identity neighbourhoods
$U_j\sub G_j$ onto open $0$-neighbourhoods $V_j\sub E_j$
are used to define $U:=\bigoplus_{j\in J}U_j$, $V:=\bigoplus_{j\in J}V_j$ and
$\phi:=\bigoplus_{j\in J}\phi_j\colon U\to V$.
For each $j\in J$, the topological group $G_j$ has a closed identity neighbourhood $K_j$
such that $K_j\sub U_j$. Set $L_j:=\phi_j(K_j)$.
After shrinking~$Q$ (and $W=\phi^{-1}(Q)$), we may assume that
\[
W\sub \bigoplus_{j\in J} K_j=:K
\]
and thus $Q\sub\prod_{j\in J} L_j=:L$.
After shrinking~$Q$ further if necessary, we may also assume that
$Q=\wb{B}^q_r(0)$
for a continuous seminorm $q$ on $\bigoplus_{j\in J}E_j$,
and we may assume that~$q$ is of the form
\[
q(x)=\sum_{j\in J}q_j(x_j)\;\; \mbox{for all $\,x=(x_j)_{j\in J}\in \bigoplus_{j\in J}E_j$,}\vspace{-1mm}
\]
for certain continuous seminorms $q_j$ on~$E_j$.
Then the closure $\wb{Q}$ of~$Q$ in $\prod_{j\in J} E_j$
is the set~$C$  of all $(x_j)_{j\in J}\in\prod_{j\in J} E_j$
such that
\[
\sum_{j\in J}q_j(x_j)\leq r\vspace{-1mm}
\]
(where the sum means the supremum of all finite partial sums).
To see this, let $\Phi$ be the set of finite subsets of~$J$.
If $x\in C$, then
$\sum_{j\in F}x_j\in Q$ for each $F\in\Phi$ (as $\sum_{j\in F}q_j(x_j)\leq \sum_{j\in J}q_j(x_j)\leq r$)
and thus $x\in \wb{Q}$.\\[2.3mm]
Conversely, let $x=(x_j)_{j\in J}\in \prod_{j\in J} E_j$
with $x\not\in C$; thus $\sum_{j\in J}q_j(x_j)>r$.
There exists a finite subset $F\sub J$ such that
\[
\sum_{j\in F}q_j(x_j)>r.\vspace{-1mm}
\]
Since $\{(y_j)_{j\in J}\in \prod_{j\in J}E_j\colon \sum_{j\in F}q_j(y_j)>r\}$
is an open subset of $\prod_{j\in J}E_j$
which has empty intersection with~$Q$, we have
$x\not\in\wb{Q}$. Thus $\wb{Q}=C$.\\[2.3mm]
By the preceding, $(\bigoplus_{j\in J}E_j) \cap \wb{Q}=(\bigoplus_{j\in J}E_j)\cap C=Q$.
Since~$L$ is closed in $\prod_{j\in J}E_j$ and contains~$Q$, we have $\wb{Q}\sub L$.
Now $K=\prod_{j\in J}K_j$ is a closed subset of $P=\prod_{j\in J} G_j$ and
\[
\psi:=\prod_{j\in J}(\phi_j|_{K_j})\colon K\to L,\quad (x_j)_{j\in J}\mto (\phi_j(x_j))_{j\in J}\vspace{-1mm}
\]
is a homeomorphism, whence $\psi^{-1}(\wb{Q})$
is closed in~$K$  and hence also closed in $P=\prod_{j\in J}G_j$.\\[2.3mm]
Let $\wb{W}$ be the closure of $W$ in~$P$. By the preceding, $\wb{W}\sub \psi^{-1}(\wb{Q})$
and thus $\wb{W}=\psi^{-1}(\wb{Q})$, as $\psi$ is a homeomorphism.
For $x\in K$, we have $\psi(x)\in \bigoplus_{j\in J} E_j$ if and only if $x\in \bigoplus_{j\in J} G_j$.
Hence
\[
G\cap \wb{W}=G\cap\psi^{-1}(\wb{Q})=\psi^{-1}\Big(\Big(\bigoplus_{j\in J}E_j\Big)\cap \wb{Q}\Big)=
\psi^{-1}(Q)=W\sub S,
\]
entailing that also hypothesis~(iii) of Lemma~\ref{sub-complete} is satisfied.\Punkt
\section{Completeness of diffeomorphism groups}\label{secILB}
The next proposition is used to prove completeness of diffeomorphism groups.
\begin{prop}\label{henceilb}
Let $G$ be a topological group which is the projective limit
of a projective sequence $((G_n)_{n\in\N}, (q_{n,m})_{n\leq m})$
of topological Hausdorff groups.
Assume that $G_n$ admits a $C^1$-manifold structure modelled on a Banach space $(E_n,\|.\|_n)$
for each $n\in\N$, and assume that for each $n\in\N$, there exists $m\geq n$
such that the map
\[
\mu_{n,m}\colon G_m\times G_m\to G_n,\quad (x,y)\mto q_{n,m}(xy)=q_{n,m}(x)q_{n,m}(y)
\]
is~$C^1$. Then $G$ is a complete topological group.
\end{prop}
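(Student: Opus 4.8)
The plan is to reduce completeness of $G$ to Lemma~\ref{slightgen}, applied to the projective system $((G_n)_{n\in\N},(q_{n,m})_{n\leq m})$ whose limit is the topological space underlying $G$. By that lemma it suffices to show that whenever $(x_\alpha)_{\alpha\in A}$ is a left Cauchy net in $G$, the image net $(q_n(x_\alpha))_{\alpha\in A}$ converges in $G_n$ for each $n\in\N$. Fix $n$ and choose $m\geq n$ as in the hypothesis, so that $\mu_{n,m}\colon G_m\times G_m\to G_n$ is $C^1$. Write $y_\alpha:=q_m(x_\alpha)\in G_m$; since $q_m\colon G\to G_m$ is a continuous homomorphism, $(y_\alpha)_{\alpha\in A}$ is a left Cauchy net in $G_m$, and $q_n(x_\alpha)=q_{n,m}(y_\alpha)=\mu_{n,m}(y_\alpha^{-1},e)^{-1}$ type manipulations will let me transfer the Cauchy estimate through the $C^1$ map. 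More precisely, I want to show the net $(q_n(x_\alpha))$ is Cauchy \emph{with respect to the metric coming from the Banach space $E_n$} on a suitable chart, because a Cauchy net for a complete metric converges.

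The key step is a Lipschitz estimate. I would work in a chart of $G_m$ around $e$ modelled on $E_m$ and a chart of $G_n$ around $e$ modelled on $E_n$, chosen so that $\mu_{n,m}$ is represented, near $(0,0)$, by a $C^1$ map between open subsets of Banach spaces. By~\ref{C1lip}(a) this local representative is locally Lipschitz, so there is a convex $0$-neighbourhood on which it is Lipschitz with some constant $C$, by~\ref{C1lip}(b). The left Cauchy condition on $(y_\alpha)$ gives, for any prescribed identity neighbourhood of $G_m$, an index $\gamma$ with $y_\beta^{-1}y_\alpha$ in that neighbourhood for $\alpha,\beta\geq\gamma$; taking the neighbourhood small enough that $y_\beta^{-1}y_\alpha$ lands in the chart domain, I can feed it into the local representative of $\mu_{n,m}$. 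Since $q_{n,m}$ is a homomorphism, $q_{n,m}(y_\beta)^{-1}q_{n,m}(y_\alpha)=q_{n,m}(y_\beta^{-1}y_\alpha)$, and the Lipschitz bound shows this is $O(C)$ times the $E_m$-distance of $y_\beta^{-1}y_\alpha$ from the identity, hence can be made as small as desired in the $E_n$-metric of the chart on $G_n$. Thus $(q_n(x_\alpha))$ is eventually inside arbitrarily small metric balls around shifts of itself; combined with the completeness of $E_n$ (a Banach space), and the fact that the left uniformity of the $C^1$-manifold group $G_n$ agrees, near $e$, with the metric uniformity of $E_n$ on a chart (exactly the Banach-Lie completeness argument recalled in the introduction), this yields convergence of $(q_n(x_\alpha))$ in $G_n$.

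Having established that $(q_n(x_\alpha))_{\alpha\in A}$ converges in $G_n$ for every $n$, Lemma~\ref{slightgen} immediately gives that $G$ is complete, since $G$ is Hausdorff (being a projective limit of Hausdorff groups) and its topology is the projective limit topology by assumption. One should also note that each $q_n(x_\alpha)$ being merely left Cauchy in $G_n$ is not by itself enough — $G_n$ need not be complete, only $C^1$-manifold — which is precisely why the $C^1$-ness of $\mu_{n,m}$ (with the $m$ possibly larger than $n$) is exploited: it converts a left Cauchy net in $G_m$ into a net in $G_n$ that is Cauchy in the strong, metric sense of the Banach chart, where completeness of $E_n$ can be invoked.

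The main obstacle is the careful chart bookkeeping in the second paragraph: one must choose the charts and the identity neighbourhoods so that (a) the local representative of $\mu_{n,m}$ is defined and $C^1$ on a convex set, (b) the relevant differences $y_\beta^{-1}y_\alpha$ eventually lie in the chart domain, and (c) the translation between ``small in the $G_n$-uniformity'' and ``small in the $E_n$-metric of the chart'' is legitimate — this last point requires knowing that on a suitable identity neighbourhood the left uniformity of $G_n$ coincides with the metric uniformity transported from $E_n$, which is the standard fact underlying completeness of Banach-Lie groups and which I would either cite or re-derive from the $C^1$-ness of multiplication and inversion in the chart. Everything else is a routine application of \ref{C1lip} and Lemma~\ref{slightgen}.
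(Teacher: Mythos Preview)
Your overall plan—reduce to Lemma~\ref{slightgen} and use a Lipschitz estimate coming from the $C^1$-map $\mu_{n,m}$—is the same as the paper's. But there is a genuine gap in the final step. You show that $q_{n,m}(y_\beta^{-1}y_\alpha)$ is small in the $E_n$-chart metric, and then try to conclude convergence of $(q_n(x_\alpha))$ by invoking ``the Banach-Lie completeness argument'', i.e., the fact that on some identity neighbourhood of $G_n$ the left uniformity coincides with the $E_n$-metric uniformity. That fact, however, requires the multiplication $G_n\times G_n\to G_n$ to be $C^1$ (or at least locally Lipschitz in the chart), and this is \emph{not} assumed: the hypothesis only says that $\mu_{n,m}$ is $C^1$ for some $m\geq n$, possibly $m>n$. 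So you cannot ``re-derive [it] from the $C^1$-ness of multiplication \dots\ in the chart'' of $G_n$, and your bridge from ``$\psi\big(q_n(x_\beta)^{-1}q_n(x_\alpha)\big)$ is small'' to ``$\psi(h_\alpha)-\psi(h_\beta)$ is small'' (for some translated net $h_\alpha$ in the $G_n$-chart) is missing.

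The paper closes this gap by using the \emph{two-variable} Lipschitz estimate on $\mu_{n,m}$ rather than just on $q_{n,m}$. One first fixes $\alpha_0$ and passes to $z_\alpha:=x_{\alpha_0}^{-1}x_\alpha$, so that \emph{both} $q_m(z_\beta)$ and $q_m(z_\beta^{-1}z_\alpha)$ lie in the $G_m$-chart $U$ for $\alpha,\beta\geq\alpha_0$. Writing $f:=\psi\circ\mu_{n,m}\circ(\phi^{-1}\times\phi^{-1})$ on $V\times V$, the identity $z_\alpha=z_\beta\cdot(z_\beta^{-1}z_\alpha)$ in $G_m$ gives
\[
\psi(q_n(z_\alpha))-\psi(q_n(z_\beta))
= f\big(\phi(q_m(z_\beta)),\,\phi(q_m(z_\beta^{-1}z_\alpha))\big)-f\big(\phi(q_m(z_\beta)),\,0\big),
\]
and now the Lipschitz bound $\|f(x,y)-f(x,0)\|_n\leq L\|y\|_m$ yields directly that $(\psi(q_n(z_\alpha)))_{\alpha\geq\alpha_0}$ is Cauchy in $E_n$. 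Completeness of $E_n$ (together with a closed $0$-neighbourhood $C\sub Q$ to keep the limit inside the chart) then gives convergence of $q_n(z_\alpha)$, hence of $q_n(x_\alpha)$. The point is that the factorisation through $\mu_{n,m}$ happens in $G_m$, where you have control, not in $G_n$, where you do not.
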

\begin{proof}
For $n\in\N$, let $q_n\colon G\to G_n$ be the limit map.
Let $(x_\alpha)_{\alpha\in A}$ be a left Cauchy net in~$G$.
By Lemma~\ref{slightgen},
it suffices to show that, for each $n\in \N$,
the left Cauchy net $(q_n(x_\alpha))_{\alpha\in A}$
converges in~$G_n$.
By hypothesis, $\mu_{n,m}$ is~$C^1$ for some $m\geq n$.
We write $\|.\|$ for the norm
\[
E_n\times E_n\to[0,\infty[,\quad (x,y)\mto\max\{\|x\|_n,\|y\|_n\}.
\]
Let $\phi\colon U\to V$ be a $C^1$-diffeomorphism
from an open identity neighbourhood $U\sub G_m$ onto an open $0$-neighbourhood $V\sub E_m$
and
$\psi\colon P\to Q$ be a $C^1$-diffeomorphism
from an open identity neighbourhood $P\sub G_n$ onto an open $0$-neighbourhood $Q\sub E_n$,
such that $\phi(e)=0$ and $\psi(e)=0$. After shrinking~$U$, we may assume that
$\mu_{n,m}(U\times U)\sub P$,
enabling us to consider the $C^1$-map
\[
f:=\psi \circ\mu_{n,m}\circ (\phi^{-1}\times \phi^{-1})\colon V\times V\to Q,\quad (x,y)\mto \psi(q_{n,m}(\phi^{-1}(x)\phi^{-1}(y))).
\]
After shrinking~$V$, we may assume that~$f$ is Lipschitz (see~\ref{C1lip}).
Let $L\geq 0$ be a Lipschitz constant for~$f$.
Then
\[
\|f(x,y)-f(x,0)\|_n\leq L\|(0,y)\|=L\|y\|_n
\]
for all $(x,y)\in V\times V$. Let
$C$ be a closed $0$-neighbourhood in~$E_n$ such that $C\sub Q$.
Then $q_n^{-1}(\psi^{-1}(C))\cap q_m^{-1}(U)$ is an identity neighbourhood in~$G$.
There is $\alpha_0\in A$ such that
\[
x_\beta^{-1}x_\alpha\in q_n^{-1}(\psi^{-1}(C))\cap q_m^{-1}(U)
\]
for all $\alpha,\beta\geq\alpha_0$. Thus
$z_\alpha:=x_{\alpha_0}^{-1} x_\alpha\in q_n^{-1}(\psi^{-1}(C))\cap q_m^{-1}(U)$
for all $\alpha\geq \alpha_0$, and $(q_n(z_\alpha))_{\alpha\geq\alpha_0}$
is a left Cauchy net in~$G_n$,
noting that
\[
z_\beta^{-1} z_\alpha=x_\beta^{-1}x_\alpha\in q_m^{-1}(U)\quad\mbox{for all $\,\alpha,\beta\geq \alpha_0$.}
\]
If we can show that $(q_n(z_\alpha))_{\alpha\geq \alpha_0}$
converges in~$G_n$, then also the subnet $(q_n(x_\alpha))_{\alpha\geq\alpha_0}$
of $(q_n(x_\alpha)_{\alpha\in A})$ will converge, as $q_n(x_\alpha)=q_n(x_{\alpha_0})q_n(z_\alpha)$.
Hence $(q_n(x_\alpha))_{\alpha\in A}$ will converge.
Since $z_\alpha=z_\beta (z_\beta^{-1}z_\alpha)$ and $z_\beta=z_\beta e$, we see that
\[
\psi(q_n(z_\alpha))-\psi(q_n(z_\beta))=
f(q_m(z_\beta),q_m(z_\beta^{-1}z_\alpha))-f(q_m(z_\beta),0)
\]
and thus
$\|\psi(q_n(z_\alpha))-\psi(q_n(z_\beta))\|_n\leq L\|q_m(z_\beta^{-1}z_\alpha)\|_n$,
which can be made arbitrarily small for large $\alpha,\beta$.
Hence $(\psi(q_n(z_\alpha)))_{\alpha\geq\alpha_0}$ is a Cauchy net in $(E_n,+)$
and thus convergent to some $w\in E_n$. Since $w_\alpha:=\psi(q_n(z_\alpha))\in C$
for all $\alpha\geq\alpha_0$ and~$C$ is closed in~$E$,
we deduce that $w\in C\sub Q$.
As a consequence, $q_n(z_\alpha)=\psi^{-1}(w_\alpha)$ converges to $\psi^{-1}(w)$.
\end{proof}
\begin{rem}\label{ilb2}
(a) In particular, Proposition~\ref{henceilb} shows that every strong (ILB)-Lie group (as in \cite{Omo})
is complete.\vspace{2mm}

(b) Since $\Diff(M)$ is a strong (ILB)-Lie group for each compact smooth manifold~$M$
without boundary (see \cite{Omo}), we deduce from (a) that $\Diff(M)$ is complete.\vspace{2mm}

(c) Let $M$ be a $\sigma$-compact finite-dimensional smooth manifold (without boundary).
If $K\sub M$ is a compact subset, then
\[
\Diff_K(M):=\{\phi\in\Diff_c(M)\colon (\forall x\in M\setminus K)\; \phi(x)=x\}
\]
is a Lie subgroup of $\Diff_c(M)$.
Morse Theory (cf.\ \cite{Hrs})
provides a compact submanifold $N\sub M$ with smooth boundary
such that $K$ is contained in the interior of~$N$.
Let $N^*$ be the double of~$N$,
which is a compact smooth manifold without boundary obtained by glueing two copies of~$N$
along their boundary. Then $\Diff_K(M)\cong \Diff_K(N^*)$ as a Lie group.\footnote{Using the double was
stimulated by discussions in~\cite{Tat}.}
As $\Diff_K(N^*)$ is a closed subgroup of the complete topological group $\Diff(N^*)$,
we see that $\Diff_K(N^*)$ (and hence also $\Diff_K(M)$) is complete.\vspace{2mm}

(d) Using Theorem A, we now obtain completeness of $\Diff_c(M)$
for $\sigma$-compact~$M$, as described in the Introduction;
applying Theorem~B to an open subgroup, completeness of $\Diff_c(M)$
for paracompact~$M$ follows.
\end{rem}
\section{Completeness of mapping groups}\label{secgauge}
We now discuss completeness of mapping groups
and test function groups.
\begin{numba}
If $k\in \N\cup\{\infty\}$ and $M$ is a $C^k$-manifold (possibly with boundary)
modelled on a Hausdorff locally convex space, we let $TM$
be the tangent bundle and recursively define $T^{j+1}M:=T(T^jM)$
for $j\in\N$ such that $j\leq k$.
If $f\colon M\to N$ is a $C^k$-map to another such manifold,
we let $Tf\colon TM\to TN$ be the tangent map
and recursively set $T^jf:=T(T^{j-1}f)\colon T^jM\to T^jN$
for all $2\leq j\in\N$ such that $j\leq k$. For convenience, $T^0M:=M$ and $T^0f:=f$.
We endow the set $C^k(M,N)$ of all $C^k$-maps from $M$ to~$N$
with the so-called \emph{compact-open $C^k$-topology}, i.e.,
the initial topology with respect to the mappings
\[
T^j\colon C^k(M,N)\to C(T^jM,T^jN),\quad f\mto T^jf
\]
for $j\in\N_0$ such that $j\leq k$, where $C(T^jM,T^jN)$ is endowed
with the compact-open topology (cf.\ \cite{NaW}).
\end{numba}
\begin{numba}
If $H$ is a Lie group,
with multiplication $\mu\colon H\times H\to H$,
then the tangent map $T\mu\colon T(H\times H)\to TH$ makes
$TH$ a Lie group, if we identify $T(H\times H)$ with $TH\times TH$ in the usual way
(see, e.g., \cite{GaN}).
Then $C^k(M,H)$ is a group for~$M$ as before and $k\in \N_0\cup\{\infty\}$,
with pointwise product
$fg:=\mu\circ (f,g)$ for $f,g\in C^k(M,H)$.
If $k\geq 1$, then
\begin{equation}\label{obssmall}
T(fg)=T\mu \circ (Tf,Tg)=Tf Tg
\end{equation}
is the product in $C(TM,TH)$, whence $Tf$ is a group homomorphism and hence
also $T^jf$ for all $j\in\N_0$ such that $j\leq k$. Since $C(T^jM,T^jH)$
is a topological group for each $j$ (see, e.g., \cite[Lemma A.5.23\,(a)]{GaN}),
we deduce that $C^k(M,H)$ is a topological group (compare also \cite{NaW}
for this discussion).
\end{numba}
Our first goal is to establish completeness properties for the topological groups
$C^k(M,H)$.
We show:
\begin{prop}\label{topolevel}
Let $H$ be a Lie group modelled on a Hausdorff locally\linebreak
convex space~$E$
and $M$ be a finite-dimensional $C^k$-manifold $($possibly with boundary$)$ for some
$k\in\N_0\cup\{\infty\}$.
If $H$ and $E$ are complete,
then also the topological group $C^k(M,H)$
is complete.
\end{prop}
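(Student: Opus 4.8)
The plan is to realize $C^k(M,H)$ as a closed subgroup of a complete topological group and then invoke~\ref{factscompl}. By~(\ref{obssmall}) and induction on~$j$, each map $T^j\colon C^k(M,H)\to C(T^jM,T^jH)$ is a homomorphism of groups, and $T^0$ is injective; since the compact-open $C^k$-topology is, by definition, the initial topology with respect to the family $(T^j)_{0\le j\le k}$, the map
\[
\Theta\colon C^k(M,H)\to P:=\prod_{0\le j\le k}C(T^jM,T^jH),\qquad f\mto (T^jf)_j,
\]
is an embedding of topological groups onto a subgroup of~$P$. I would first check that each factor $C(T^jM,T^jH)$ is a complete topological group. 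On the one hand, $T^jH$ is a Lie group, being the iterated tangent bundle of the Lie group~$H$, and it is complete: arguing by induction on~$j$, the bundle projection $\pi_{T^jH}\colon T^jH=T(T^{j-1}H)\to T^{j-1}H$ is an open quotient homomorphism of topological groups whose kernel is $L(T^{j-1}H)$ with its additive group structure, which is isomorphic to a finite product of copies of~$E$ and hence complete; as $T^{j-1}H$ is complete by the inductive hypothesis (with $T^0H=H$ the base case), $T^jH$ is complete by~\ref{factscompl}\,(d). On the other hand, $T^jM$ is a locally compact Hausdorff space (as~$M$ is finite-dimensional), hence a $k$-space; therefore a left Cauchy net $(f_\alpha)$ in $C(T^jM,T^jH)$ converges pointwise to a map $f\colon T^jM\to T^jH$, the convergence is uniform on compact sets, and $f$ is continuous because its restriction to each compact set is a uniform limit of continuous maps into the topological group~$T^jH$; thus $f_\alpha\to f$ in $C(T^jM,T^jH)$. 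Hence $P$ is complete by~\ref{factscompl}\,(b). For $k=0$ this already proves the proposition, as then $C^k(M,H)=C(M,H)=P$.

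For general~$k$, it remains to show that $\Theta(C^k(M,H))$ is closed in~$P$; completeness of $C^k(M,H)$ then follows from~\ref{factscompl}\,(a). So let $(f_\alpha)$ be a net in $C^k(M,H)$ with $T^jf_\alpha\to g_j$ in $C(T^jM,T^jH)$ for each $j\le k$; I must show that $g_0$ is $C^k$ and $T^jg_0=g_j$ for all~$j$. As being~$C^k$ is a local property on~$M$, I would fix a compact set~$K$ contained in a chart domain of~$M$, cover the compact set $g_0(K)\sub H$ by finitely many chart domains of~$H$, subdivide~$K$ into finitely many compact pieces each of which is mapped by~$g_0$ into a single such chart domain (and hence, for~$\alpha$ large, also by~$f_\alpha$, by uniform convergence on a slightly larger compact piece), and read off the claim piece by piece in the corresponding charts. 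This reduces everything to the elementary statement that if $f_\alpha\colon V\to E$ are $C^k$-maps on an open subset~$V$ of~$\R^n$ (or of a half-space) such that $f_\alpha\to g_0$ and $d^jf_\alpha\to h_j$ uniformly on compact sets for $1\le j\le k$, then $g_0$ is~$C^k$ with $d^jg_0=h_j$; and this follows by passing to the limit in the integral identity $f_\alpha(x)=f_\alpha(x_0)+\int_0^1 df_\alpha(x_0+t(x-x_0))(x-x_0)\,dt$ furnished by the fundamental theorem of calculus (which is available since~$E$ is complete), first for the first derivative and then inductively in the order of differentiation.

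The steps producing the limits~$g_j$ and the completeness of the groups $C(T^jM,T^jH)$ are routine consequences of the facts collected in~\ref{factscompl} together with local compactness of the finite-dimensional manifolds~$T^jM$; the substantive point — and the place where completeness of~$E$ (hence of~$H$) is genuinely used — is the last one, namely recognizing a compatible family of limit maps $(g_j)$ as the iterated tangent maps of a single $C^k$-map. If the closedness of the embedding $f\mto(T^jf)_j$ is available from the literature on manifolds of mappings (cf.\ the description of the compact-open $C^k$-topology in~\cite{NaW}), this step may simply be quoted instead.
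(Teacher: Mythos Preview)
Your argument is correct and follows essentially the same strategy as the paper: realize $C^k(M,H)$ as a closed subgroup of a product of complete groups of the form $C(X,G)$ with~$X$ locally compact and~$G$ a complete iterated tangent group of~$H$, then invoke~\ref{factscompl}. The only organizational difference is that the paper proceeds recursively via the one-step embedding $C^k(M,H)\hookrightarrow C(M,H)\times C^{k-1}(TM,TH)$ (Lemma~\ref{la-mapcplte}) together with $C^\infty=\pl C^k$ (Lemma~\ref{la2map}), whereas you embed directly into $\prod_{j\le k}C(T^jM,T^jH)$ and handle $k=\infty$ in the same breath; the closedness argument and the use of the extension property for~$T^jH$ are the same in substance.
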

The proof is based on two lemmas.
\begin{la}\label{la-mapcplte}
Given $k\in \N$, let $M$ be a finite-dimensional $C^k$-manifold
$($possibly with boundary$)$ and $N$ be a $C^k$-manifold.
Then the map
\[
\theta\colon C^k(M,N)\to C(M,N)\times C^{k-1}(TM,TN),\quad f\mto (f,Tf)
\]
is a topological embedding with closed image.
If $N$ is a Lie group, then $\theta$ is a homomorphism of groups.
\end{la}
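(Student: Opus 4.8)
Most of the statement is formal. The two components of $\theta$ are $f\mapsto f\in C(M,N)$ and $f\mapsto Tf\in C^{k-1}(TM,TN)$; when $N$ is a Lie group the first is a homomorphism into $C(M,N)$ and the second a homomorphism into $C^{k-1}(TM,TN)$ by~(\ref{obssmall}), since both carry the pointwise product, so $\theta$ is a homomorphism. Injectivity is clear from the first component. For the embedding property I would work with initial topologies: the compact--open $C^k$-topology on $C^k(M,N)$ is initial with respect to the maps $T^j\colon f\mapsto T^jf$, $0\le j\le k$; each of these equals $p_1\circ\theta$ (for $j=0$) or $(T^{j-1}\circ p_2)\circ\theta$ (for $1\le j\le k$), where $p_1,p_2$ are the projections of $C(M,N)\times C^{k-1}(TM,TN)$ and $T^{j-1}\colon C^{k-1}(TM,TN)\to C(T^jM,T^jN)$ is one of the continuous maps defining the $C^{k-1}$-topology. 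Hence the $C^k$-topology is coarser than the initial topology induced by~$\theta$; conversely $\theta$ is continuous, since $p_1\circ\theta=T^0$ and $T^i\circ(p_2\circ\theta)=(f\mapsto T^{i+1}f)$ is continuous for $0\le i\le k-1$ (as $i+1\le k$). Thus $\theta$ is a topological embedding.

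It remains to show that the image of $\theta$ is closed: if $(f_\alpha)_{\alpha\in A}$ is a net in $C^k(M,N)$ with $f_\alpha\to g$ in $C(M,N)$ and $Tf_\alpha\to G$ in $C^{k-1}(TM,TN)$, then $g\in C^k(M,N)$ and $Tg=G$. I would first reduce this to the case $k=1$. Since the $C^{k-1}$-topology is finer than the compact--open topology, we also have $Tf_\alpha\to G$ in $C(TM,TN)$, while $f_\alpha\to g$ in $C(M,N)$; once the case $k=1$ is known, this yields $g\in C^1(M,N)$ with $Tg=G$, and then $Tg=G\in C^{k-1}(TM,TN)$ forces $g\in C^k(M,N)$, because a $C^1$-map between manifolds whose tangent map is $C^{k-1}$ is $C^k$ (essentially the recursive definition of $C^k$-maps). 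So everything reduces to: if $f_\alpha\to g$ in $C(M,N)$ and $Tf_\alpha\to G$ in $C(TM,TN)$ with each $f_\alpha$ of class~$C^1$, then $g$ is $C^1$ and $Tg=G$.

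This last point is the analytic heart, and I would prove it in charts. Passing $\pi_{TN}\circ Tf_\alpha=f_\alpha\circ\pi_{TM}$ to the limit gives $\pi_{TN}\circ G=g\circ\pi_{TM}$, so $G$ is a bundle map over~$g$. Fix $x_0\in M$; using continuity of~$g$, choose a chart of~$M$ around $x_0$ and a chart of~$N$ around $g(x_0)$ in which $g$ and, for large~$\alpha$, the $f_\alpha$ take values (uniform convergence on a compact chart neighbourhood). Keeping the notation $f_\alpha,g$ for the local representatives, one reduces to: $\Omega$ a relatively open subset of $\R^n$ (or of a half-space, with the usual one-sided conventions at the boundary), $W\subseteq\Omega$ a convex neighbourhood of~$x_0$, $E$ a Hausdorff locally convex space, $f_\alpha\colon\Omega\to E$ of class~$C^1$ with $f_\alpha\to g$ uniformly on compacta and $df_\alpha\to h$ uniformly on compact subsets of $\Omega\times\R^n$, where $h$ is continuous and $(g,h)$ is the local representative of~$G$; the claim is $dg=h$. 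The tool is the mean value estimate~\ref{C1lip} (applied after composing with the canonical maps $E\to E_q$ into the associated normed spaces, for $q$ a continuous seminorm on~$E$), equivalently the fundamental theorem of calculus: for $x\in W$, $v\in\R^n$ and $t$ small,
\[
q\bigl(f_\alpha(x+tv)-f_\alpha(x)-t\,df_\alpha(x,v)\bigr)\;\le\;|t|\,\sup_{0\le s\le1}q\bigl(df_\alpha(x+stv,v)-df_\alpha(x,v)\bigr).
\]
As $df_\alpha\to h$ uniformly on a compact neighbourhood of $\{x\}\times\{v\}$ and $h$ is uniformly continuous there, the right-hand side is $\le|t|\ve$ for all $|t|\le\delta$ and all $\alpha\ge\alpha_1$, for suitable $\delta>0$, $\alpha_1$ — and this bound is uniform in~$\alpha$. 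Now fix $t$ with $0<|t|\le\delta$ and let $\alpha\to\infty$ (the factor $1/|t|$ is harmless once $t$ is fixed): using $f_\alpha\to g$ at $x$ and $x+tv$ and $df_\alpha(x,v)\to h(x,v)$ we get $q\bigl(\tfrac1t(g(x+tv)-g(x))-h(x,v)\bigr)\le\ve$ for all $0<|t|\le\delta$. Hence $dg(x,v)=h(x,v)$ exists; as this holds for all $x,v$ and $h$ is continuous, the continuous map~$g$ is $C^1$ with $dg=h$. Reassembling the charts gives $g\in C^1(M,N)$ and $Tg=G$.

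The formal parts (homomorphism, injectivity, embedding, the reduction to $k=1$) are routine. The real work — and the only place needing care — is the last interchange-of-limits argument: making the displayed estimate uniform in~$\alpha$, then choosing~$\alpha$ after~$t$ to tame the $1/|t|$ factor, and carrying this out consistently over locally convex codomains and over charts, including the manifold-with-boundary case.
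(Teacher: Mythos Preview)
Your proof is correct and follows essentially the same route as the paper: the homomorphism, injectivity, and embedding parts are handled identically via transitivity of initial topologies, and the closedness of the image is shown by passing to charts and reducing to the $C^1$ case. The only difference is cosmetic: where the paper quotes \cite[Lemma~1.4.16 and Proposition~A.5.17]{GaN} for the interchange-of-limits step (``$f_\alpha\to f$, $df_\alpha\to h$ uniformly on compacta $\Rightarrow$ $f$ is $C^1$ with $df=h$''), you write out that argument explicitly via the mean value estimate and a uniform-in-$\alpha$ bound.
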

\begin{proof}
The final observation follows from (\ref{obssmall}).\\[2.3mm]
It is clear that $\theta$ is injective. Moreover,
the topology~$\cO$ on~$C^k(M,N)$ making $\theta$ a topological embedding is initial
with respect to the inclusion map
\[
T^0\colon C^k(M,N)\to C(M,N)
\]
and $T\colon C^k(M,N)\to C^{k-1}(TM,TN)$.
As the topology on $C^{k-1}(TM,TN)$ is initial with respect to
the maps $T^j\colon C^{k-1}(TM,TN)\to C(T^{j+1}M,T^{j+1}N)$
for $j\in\{0,\ldots, k-1\}$,
we deduce with the well-known transitivity of initial topologies (see, e.g., \cite[Lemma~A.2.7]{GaN})
that $\cO$ is initial with respect to $T^0$ and the maps $T^j\circ T\colon C^k(M,N)\to C(T^{j+1}M,T^{j+1}N)$
for $j\in\{0,\ldots, k-1\}$. Hence~$\cO$ coincides with the compact-open $C^k$-topology.\\[2.3mm]
To see that $\theta$ has closed image, let $(f_\alpha,Tf_\alpha)_{\alpha\in A}$ be a net
in the image of~$\theta$ which converges to some $(f,g)\in C(M,N)\times C^{k-1}(TM,TN)$.
It now suffices to show that each $x\in M$ has an open neighbourhood $U$ such that
$f|_U$ is $C^1$ and $T(f|_U)=g|_{TU}$; then $f$ is $C^k$ and $g=Tf$,
whence $(f,g)=\theta(f)$ is in the image of~$\theta$.\\[2.3mm]
For $x\in M$, there is a chart $\psi\colon U_\psi\to V_\psi\sub Y$ of~$N$ such that $f(x)\in U_\psi$
(where $Y$ is the
modelling space of~$N$) and a chart $\phi\colon U_\phi\to V_\phi\sub X$ of~$M$
with $x\in U_\phi$ such that $U_\phi$ has compact closure $K:=\wb{U_\phi}$
and $f(\wb{U_\phi})\sub U_\psi$ (where~$X$ is the modelling space of~$M$).
As the compact-open $C^k$-topology on $C^k(M,N)$ is finer than
the compact-open topology, the set
\[
W:=\{h\in C^k(M,N)\colon h(K)\sub U_\psi\}
\]
is an open neighbourhood of~$f$ in $C^k(M,N)$.
Thus, we find $\alpha_0\in A$ such that $f_\alpha\in W$
for all $\alpha\in A$ such that $\alpha\geq\alpha_0$.
For such~$\alpha$, we can define
\[
h_\alpha:=\psi\circ f_\alpha \circ\phi^{-1}\colon V_\phi\to Y.
\]
Then $h_\alpha\to \psi\circ f\circ \phi^{-1}$ and $d(h_\alpha)\to d\psi\circ g|_{TU_\phi}\circ T\phi^{-1}$
uniformly on compact sets (see Lemmas~A.5.3,
A.5.5 and A.5.9 in~\cite{GaN}),
entailing that $h:=\psi\circ f\circ \phi^{-1}$ is $C^1$ with
$dh=d\psi\circ g|_{TU_ \phi}\circ T\phi^{-1}$
(by \cite[Lemma~1.4.16]{GaN}),\footnote{To apply the
lemma, give $\{\alpha\in A\colon \alpha\geq \alpha_0\} \cup\{\infty\}$ the topology with $\{\alpha\}$
for $\alpha\in A$ with $\alpha\geq\alpha_0$
and $\{\alpha\in A\colon \alpha\geq\beta\}\cup\{\infty\}$ for $\beta\in A$ with $\beta\geq\alpha_0$
as a basis (which is a Hausdorff topology).
Lemma 1.4.16 and Proposition A.5.17 from \cite{GaN} imply the assertion.}
and thus $Th=T\psi\circ g|_{TU_\phi}\circ T\phi^{-1}$.
Hence $f|_{U_\phi}$ is $C^1$ with $T(f|_{U_\phi})=g|_{TU_\phi}$.
\end{proof}
\begin{la}\label{la2map}
Let $M$ and $N$ be smooth manifolds $($possibly with boundary$)$, both modelled on Hausdorff
locally convex spaces. Then
\[
C^\infty(M,N)\, =\,\pl_{k\in\N_0} \!C^k(M,N)\vspace{-.8mm}
\]
as a topological space, using the respective inclusion maps as
the bonding maps and limit maps.
\end{la}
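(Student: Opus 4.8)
The plan is to check the two halves of the statement separately: that the underlying set of the projective limit is $C^\infty(M,N)$ with the inclusion maps as limit maps, and that the projective limit topology coincides with the compact-open $C^\infty$-topology. Both will follow by unravelling the definitions, the topological half by the transitivity of initial topologies.

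First I would dispose of the set-theoretic point and of the fact that we really have a projective system of topological spaces. Each bonding map $q_{k,m}\colon C^m(M,N)\to C^k(M,N)$ (for $k\le m$) is the inclusion, and it is continuous: the compact-open $C^k$-topology on $C^k(M,N)$ is initial with respect to $T^j\colon C^k(M,N)\to C(T^jM,T^jN)$ for $j\in\{0,\dots,k\}$, and each such $T^j$, restricted to the subset $C^m(M,N)$, is one of the maps defining the compact-open $C^m$-topology on $C^m(M,N)$, hence continuous there. Realizing the projective limit in the standard way as a subspace of $\prod_{k\in\N_0}C^k(M,N)$, a point of it is a family $(f_k)_{k\in\N_0}$ with $f_k=q_{k,m}(f_m)=f_m$ whenever $k\le m$; thus all $f_k$ agree with a single map $f\in\bigcap_{k\in\N_0}C^k(M,N)=C^\infty(M,N)$, and conversely every such $f$ gives a compatible family. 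So the projective limit is $C^\infty(M,N)$ as a set, with $k$-th limit map the inclusion $\iota_k\colon C^\infty(M,N)\to C^k(M,N)$.

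Next I would compare the topologies. By construction the projective limit topology on $C^\infty(M,N)$ is initial with respect to the limit maps $\iota_k$, $k\in\N_0$; since the topology on each $C^k(M,N)$ is in turn initial with respect to $T^j\colon C^k(M,N)\to C(T^jM,T^jN)$ for $0\le j\le k$, transitivity of initial topologies (e.g.\ \cite[Lemma~A.2.7]{GaN}) shows it is initial with respect to the composites $T^j\circ\iota_k$ for $k\in\N_0$ and $0\le j\le k$. For $f\in C^\infty(M,N)$ one has $T^j(\iota_k(f))=T^jf$, independently of $k\ge j$; as $(k,j)$ ranges over all admissible pairs, $j$ ranges over all of $\N_0$ (take $k=j$), and every pair with $j<k$ contributes a map already present for $k=j$. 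Hence the projective limit topology is precisely the initial topology with respect to $T^j\colon C^\infty(M,N)\to C(T^jM,T^jN)$ for all $j\in\N_0$, i.e.\ the compact-open $C^\infty$-topology. Compatibility of the inclusions with the limit and bonding maps is immediate, which completes the argument. I do not anticipate a genuine obstacle here; the only points meriting care are the continuity of the bonding maps—needed so that the system is a projective system of topological spaces—and the fact, built into the definition of the compact-open $C^k$-topologies via the maps $T^j$, that $T^jf$ is the same whether $f$ is viewed as an element of $C^\infty(M,N)$ or of any $C^k(M,N)$ with $k\ge j$.
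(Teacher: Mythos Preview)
Your proposal is correct and follows essentially the same approach as the paper: identify the projective limit set-theoretically with $C^\infty(M,N)$ via the constant sequences, then use transitivity of initial topologies to see that the induced topology is initial with respect to all the $T^j$, hence equals the compact-open $C^\infty$-topology. Your version is slightly more explicit in verifying continuity of the bonding maps and in citing the transitivity lemma, but the argument is the same.
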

\begin{proof}
Consider the standard realization $P\sub \prod_{k\in\N_0}C^k(M,N)$
of the projective limit. As all bonding maps are the inclusion maps, it is the set
of all sequences $(f_k)_{k\in\N_0}\in\prod_{k\in\N_0}C^k(M,N)$ such that
\[
f_j=f_k\quad\mbox{for all $j,k\in\N_0$ such that $j\leq k$.}
\]
Then $f_0=f_k$ for all $k\in\N_0$ and thus $f_0\in C^\infty(M,N)$,
entailing that the map
\[
\Phi\colon C^\infty(M,N)\to P,\quad f\mto (f)_{k\in\N_0}
\]
is a bijection. The topology $\cO$ on $C^\infty(M,N)$ making $\Phi$ a homeomorphism
is initial with respect the compositions $T^j\circ\pi_k\circ\Phi=T^j$ for $k\in\N_0$ and
$j\in\N_0$ such that $j\leq k$ (where $\pi_k$ is the projection from the direct
product onto its $k$th factor). It therefore coincides with the compact-open $C^\infty$-topology.
\end{proof}
{\bf Proof of Proposition~\ref{topolevel}.}
If we can show that $C^k(M,H)$ is complete for each $k\in\N_0$, then also
$C^\infty(M,H)$ (which is the projective limit of the latter topological groups, by Lemma~\ref{la2map})
will be complete. We proceed by induction. If $k=0$, then $C^0(M,H)=C(M,H)$
is complete since~$H$ is complete and~$M$, being locally compact,
is a $k_\R$-space\footnote{Recall that a topological space~$X$ is called a \emph{$k_\R$-space}
if it is Hausdorff and functions $f\colon X\to\R$
are continuous if and only if $f|_K$ is continuous for each compact subset $K\sub X$.}
(see, e.g., \cite[Lemma~A.5.23\,(d)]{GaN}).\\[2.3mm]
If $k\in\N$ and the assertion holds for $k-1$ in place of~$k$,
then $C^{k-1}(TM,TH)$ is complete as $TM$ has finite dimension,
$TH\cong L(H)\rtimes H$ is complete
(see \ref{factscompl}\,(e))
and also its modelling space $E\times E$ is complete.
Moreover, $C(M,H)$ is complete. As,
by Lemma~\ref{la-mapcplte},
the topological group $C^k(M,H)$
is isomorphic to a closed subgroup of the direct product $C(M,H)\times C^{k-1}(M,TH)$
of complete groups, also $C^k(M,H)$ is complete.\Punkt
\begin{rem}\label{thereafter}
(a) If $M$ is a compact smooth manifold and~$H$ a Lie group, then the topology on the Lie group
$C^k(M,H)$ (for $k\in\N_0\cup\{\infty\}$) coincides with the compact-open $C^k$-topology
defined above (see \cite{GaN}). Hence $C^k(M,H)$ is complete whenever $H$ and its modelling space
are complete.\vspace{2mm}

(b) If $M$ is a finite-dimensional $\sigma$-compact
smooth manifold and $K\sub M$ a compact subset, then the topology on the Lie group
\[
C^k_K(M,H):=\{\gamma\in C^k(M,H)\colon \Supp(\gamma)\sub K\}
\]
is induced by the compact-open $C^k$-topology on $C^k(M,H)$.
Since $C^k_K(M,H)$ is a closed subgroup of $C^k(M,H)$,
we deduce that $C^k_K(M,H)$ is complete whenever $H$
and its modelling space are complete.
\end{rem}
\begin{prop}\label{mapscomplete}
Let $M$ be a paracompact finite-dimensional smooth manifold
and $H$ be a Lie group. If $H$ and its modelling space are complete,
then $C^k_c(M,H)$ is complete for each $k\in\N_0\cup\{\infty\}$.
\end{prop}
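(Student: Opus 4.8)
The plan is to reduce Proposition~\ref{mapscomplete} to the already-established results by treating the $\sigma$-compact case and the general paracompact case separately. For the $\sigma$-compact case, fix an exhaustion $K_1\sub K_2\sub\cdots$ of $M$ by compact sets with $K_n\sub K_{n+1}^0$, so that $C^k_{K_1}(M,H)\sub C^k_{K_2}(M,H)\sub\cdots$ is a strict direct sequence of topological groups (strictness and the fact that $C^k_{K_n}(M,H)$ is complete come from Remark~\ref{thereafter}\,(b), using the hypothesis that $H$ and its modelling space are complete). One then invokes the Lie-group-theoretic fact, recorded in the Introduction (from \cite{FUN}), that $C^k_c(M,H)$ is the direct limit topological group of this sequence and that product sets are large in it. Theorem~A\,(b) then yields completeness of $C^k_c(M,H)$ directly.

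For the general paracompact case, let $(M_j)_{j\in J}$ be the family of connected components of~$M$; each $M_j$ is $\sigma$-compact (being a connected manifold), so by the previous paragraph $C^k_c(M_j,H)$ is complete. The restriction map $\gamma\mto(\gamma|_{M_j})_{j\in J}$ is, as noted in the Introduction, an isomorphism of Lie groups $C^k_c(M,H)\to\bigoplus_{j\in J}C^k_c(M_j,H)$ onto the weak direct product. Since each factor is complete, Theorem~B gives that $\bigoplus_{j\in J}C^k_c(M_j,H)$ is complete, and hence so is $C^k_c(M,H)$, completeness being an isomorphism invariant. (The sequentially complete variant, if desired, follows the same route through the sequentially complete halves of Theorems~A and~B.)

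The only genuinely delicate point is verifying that the modelling space $C^k_c(M_j,E)$ of $C^k_c(M_j,H)$ is complete when $E$ is, so that the Lie group $C^k_c(M_j,H)$ actually falls under Theorem~A with complete constituents $C^k_{K_n}(M_j,H)$ --- but this is exactly what Remark~\ref{thereafter} addresses: $C^k_{K_n}(M_j,H)$ is a closed subgroup of the complete group $C^k(M_j,H)$ of Proposition~\ref{topolevel} (or of $C^k(N^*,H)$ after passing to a compact double $N^*$ as in Remark~\ref{ilb2}\,(c) if one prefers to avoid non-compact $M_j$), hence complete by~\ref{factscompl}\,(a). The main obstacle is really just bookkeeping: one must be careful that the ``product sets are large'' hypothesis of Theorem~A is supplied by the cited results of \cite{JFA}, \cite{FUN} rather than reproved, and that in the paracompact case the countability needed to identify the weak direct product with a box product is not required --- Theorem~B is stated for arbitrary index sets $J$, so no extra argument is needed there.
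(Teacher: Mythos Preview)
Your proposal is correct and follows essentially the same route as the paper's own proof: reduce the $\sigma$-compact case to Theorem~A via the strict direct sequence $C^k_{K_n}(M,H)$ (completeness from Remark~\ref{thereafter}\,(b), largeness of product sets from the local section argument in~\cite{JFA}), then handle the general paracompact case via connected components and Theorem~B. Two small remarks: the precise reference for ``product sets are large'' in $C^k_c(M,H)$ is \cite[Example~11.6 and Remark~11.5]{JFA} rather than~\cite{FUN}, and your parenthetical about a sequentially complete variant via Theorem~A is off, since Theorem~A is stated only for full completeness.
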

\begin{proof}
If $M$ is $\sigma$-compact, we choose a sequence $(K_n)_{n\in\N}$ of compact subsets
of~$M$ such that $M=\bigcup_{n\in\N} K_n$ and $K_n\sub K_{n+1}^0$ for each~$n$.
Then
\[
C^k_{K_1}(M,H)\sub C^k_{K_2}(M,H)\sub \cdots
\]
is a strict direct sequence of topological groups and product sets
are large in $C^k_c(M,H)=\bigcup_{n\in\N}C^k_{K_n}(M,H)$
as the product map
\[
\bigoplus_{n\in\N}C^k_{K_n}(M,H)\to C^k_c(M,H),\;\;(\gamma_1,\ldots,\gamma_n,e,e,\ldots)\mto
\gamma_1\gamma_2\cdots\gamma_n
\]
admits a smooth local section around~$e$ which takes $e$ to $e$
(see \cite[Example 11.6 and Remark 11.5]{JFA}).
Since $C^k_{K_n}(M,H)$ is complete for each $n\in\N$ (see Remark~\ref{thereafter}\,(b)),
we deduce with Theorem~A that $C^k_c(M,H)$ is complete.\\[2.3mm]
If $M$ is merely paracompact, we let $(M_j)_{j\in J}$ be the family of connected components
of~$M$ (each of which is $\sigma$-compact). Then the map
\[
\Phi\colon C^k_c(M,H)\to\bigoplus_{j\in J} C^k_c(M_j,H),\quad \gamma\mto (\gamma|_{M_j})_{j\in J}
\]
is an isomorphism of groups and we give $C^k_c(M,H)$ the smooth Lie group structure which turns
$\Phi$ into an isomorphism of Lie groups. As the weak direct product is complete by
the first part of the proof and Theorem~B, we see that also $C^k_c(M,H)$ is complete.
\end{proof}
\begin{rem}\label{alsogauge}
Let $H$ be a Lie group, $M$ be a smooth manifold of dimension $m\in\N$ and
$P\to M$ be a smooth principal
bundle with structure group~$H$.\vspace{2.3mm}

(a) If $M$ is $\sigma$-compact and the condition $\mbox{SUB}_\oplus$
of~\cite{Sch} is satisfied,\footnote{This is automatic if~$H$
is \emph{locally exponential} in the sense that $H$ has a smooth exponential function
which is a local $C^\infty$-diffeomorphism at~$0$.}
then the gauge group $\Gau_c(P)$ of~$P$ is a Lie group
which is isomorphic to a closed Lie subgroup of the weak direct product
$\bigoplus_{n\in\N}C^\infty(K_n,H)$,
where $(K_n)_{n\in\N}$ is a locally finite cover
of~$M$ by $m$-dimensional compact smooth submanifolds $K_n$ with boundary
such that $P$ is trivializable on some open neighbourhood of~$K_n$.
If $H$ and its modelling space are complete, then
$C^\infty(K_n,H)$ is complete for each $n\in\N$
(by Proposition~\ref{topolevel}),
whence also the weak direct product is complete (by Theorem~B)
and hence also $\Gau_c(P)$, being isomorphic to a closed subgroup of the latter
as a topological group.
Then the full group $\Aut_c(P)$ of compactly supported symmetries of~$P$
(which was made a Lie group in~\cite{Sch})\footnote{For compact~$M$, the Lie group
$\Aut(P)$ was already constructed in~\cite{Woc}.}
is complete,
as it is an extension
\[
\{e\}\to\Gau_c(P)\to \Aut_c(P)\to \Diff_c(M)_P\to\{e\}
\]
of Lie groups (and hence of topological groups)
for some open subgroup $\Diff(M)_P\sub \Diff_c(M)$.
Since $\Diff_c(M)$ is complete (as already observed) and also $\Gau_c(P)$ is complete, so is the extension
$\Aut_c(P)$ (as recalled in \ref{factscompl}\,(d)).\vspace{2.3mm}

(b) If $M$ is paracompact and condition $\mbox{SUB}_\oplus$
is satisfied by $P|_C$ for each connected component $C$ of~$M$,
let $(M_j)_{j\in J}$ be the family of connected components of~$M$.
We can identify $\Gau_c(P)$ with the weak direct product $\bigoplus_{j\in J}\Gau_c(M_j)$
(whence it can be considered as a complete Lie group by (a) and Theorem~B).
Moreover, $\Aut_c(P)$ can be made a Lie group having the weak direct product
$\bigoplus_{j\in J}\Aut_c(P|_{M_j})$ as an open subgroup. Hence $\Aut_c(P)$ is complete,
using Theorem~B.
\end{rem}
\section{\!\!Product sets in unions of Banach-Lie groups}\label{unionBan}
We now discuss ascending unions of Banach-Lie groups.
\begin{numba}\label{presett}
Let $G_1\sub G_2\sub\cdots$ be analytic Banach-Lie groups over $\K\in\{\R,\C\}$
such that the inclusion maps $j_{n+1,n}\colon G_n\to G_{n+1}$ are $\K$-analytic group
homomorphisms. Identifying the Banach-Lie algebra $\cg_n:=L(G_n)$ with the
image of the map $L(j_{n+1,n})$ in~$\cg_{n+1}$, we can consider the ascending union
$\cg:=\bigcup_{n\in\N}\cg_n$
and endow it with the locally convex direct limit topology. Give $G:=\bigcup_{n\in\N}G_n$
the unique group structure making each inclusion map $G_n\to G$
a group homomorphism.
Define
$\exp_G\colon \cg\to G$
piecewise via $\exp_G(x):=\exp_{G_n}(x)$ if $x\in\cg_n$.
\end{numba}
\begin{numba}\label{Dahsett}
(Dahmen's setting). \emph{If, in the situation of} \ref{presett},
\begin{itemize}
\item[\rm(a)]
\emph{$\cg$ is Hausdorff};
\item[\rm(b)]
\emph{There are norms $\|.\|_n$ on $\cg_n$ defining its topology for $n\in\N$,
such that the Lie bracket of $\cg_n$ and each inclusion map
$(\cg_n,\|.\|_n)\to(\cg_{n+1},\|.\|_{n+1})$ has operator norm $\leq 1$; and}
\item[\rm(c)] \emph{$\exp_G$ is injective on some $0$-neighbourhood,}
\end{itemize}
\emph{then $G$ admits a unique $\K$-analytic Lie group
structure such that $P:=\exp_G(Q)$ is open in~$G$ for some open $0$-neighbourhood
$Q\sub\cg$ and $\exp_G|_Q^P$ a diffeomorphism of $\K$-analytic manifolds.} (See \cite[Theorem~C]{Da1}).
\end{numba}
\begin{prop}\label{arelarge}
Let $G_1\sub G_2\sub\cdots$ be analytic Banach-Lie groups over $\K\in\{\R,\C\}$
such that the inclusion maps $G_n\to G_{n+1}$ are $\K$-analytic group
homomorphisms. Assume that Dahmen's conditions \emph{(a)--(c)} from~\emph{\ref{Dahsett}}
are satisfied and
endow $G$ with the $\K$-analytic Lie group structure described there.
Let $\cO$ be the topology on the Lie group~$G$.
Then product sets are large in $(G,\cO)=\bigcup_{n\in\N}G_n$.
As a consequence, $\cO=\cO_{TG}$ holds, i.e., $\cO$ makes~$G$ the direct limit topological
group $\dl G_n$.\vspace{-.7mm} If, moreover, the direct sequence
$G_1\sub G_2\sub\cdots$ is strict, then $(G,\cO)$ is complete.
\end{prop}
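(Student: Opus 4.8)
The plan is to prove the three assertions in order, deriving the completeness claim as a corollary of the first two together with Theorem~A. First I would show that product sets are large in $(G,\cO)$. The key is to exploit the Baker--Campbell--Hausdorff structure provided by Dahmen's setting: the condition that the Lie bracket on each $\cg_n$ has operator norm $\leq 1$ (relative to the norms $\|.\|_n$) and that the inclusions $(\cg_n,\|.\|_n)\to(\cg_{n+1},\|.\|_{n+1})$ are norm-nonexpansive means that the BCH series converges on a fixed ball $\wb{B}^{\|.\|_n}_\rho(0)$ of radius $\rho$ independent of $n$, and that multiplication $\exp_{G_n}(x)\exp_{G_n}(y)=\exp_{G_n}(x\ast y)$ holds there with uniform estimates. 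Consequently, given identity neighbourhoods $V_n\sub G_n$, I would pick $r_n>0$ small (say $r_n\leq 2^{-n}\rho$) with $\exp_{G_n}(\wb{B}^{\|.\|_n}_{r_n}(0))\sub V_n$, and show that every finite product $\exp_{G_1}(x_1)\cdots\exp_{G_n}(x_n)$ with $x_j\in\wb{B}^{\|.\|_j}_{r_j}(0)$ equals $\exp_G(x_1\ast x_2\ast\cdots\ast x_n)$, where the iterated BCH product lies in a fixed ball $\wb{B}^q_\delta(0)$ for a continuous seminorm $q$ on $\cg=\dl\,\cg_n$ with $\delta$ depending only on $\sum_j r_j<\infty$. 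Then $\bigcup_n V_1\cdots V_n\supseteq \exp_G$ of a $0$-neighbourhood of $\cg$, hence is an identity neighbourhood in $(G,\cO)$. Here I would use that $P=\exp_G(Q)$ is open and $\exp_G|_Q^P$ is a diffeomorphism, so that $\exp_G$ of a $0$-neighbourhood in $\cg$ is an identity neighbourhood in $(G,\cO)$.

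The main obstacle is precisely this uniform-convergence bookkeeping for the iterated BCH product: one needs that the countably-infinite formal product $x_1\ast x_2\ast\cdots$ makes sense and stays in a bounded (hence, by the direct-sum structure, equicontinuously controlled) region of $\cg$. The estimate $\|x\ast y\|\leq \|x\|+\|y\|+(\text{higher order})$ valid on a fixed ball, combined with $\sum r_j<\infty$, should give convergence of the partial products and a uniform bound; the locally convex direct limit topology on $\cg$ then supplies a single continuous seminorm $q$ controlling the whole tail. I would phrase this via: if $\sum_j\|x_j\|_j$ is small enough, then all partial BCH products $x_1\ast\cdots\ast x_n$ lie in $\wb{B}^{\|.\|_n}_{2\sum_j r_j}(0)\sub\cg_n$ and form a Cauchy sequence, converging in $\cg$ to an element of a prescribed $q$-ball.

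Once product sets are large, \cite[Proposition~11.8]{JFA} gives $\cO=\cO_{TG}$ immediately, establishing the second assertion. For the final assertion, assume the direct sequence $G_1\sub G_2\sub\cdots$ is strict. Each $G_n$ is a Banach--Lie group, hence complete (Proposition~1 in \cite[Chapter~III, \S1.1]{Bou}, as recalled in the Introduction). Since product sets are large in $(G,\cO)=\bigcup_{n\in\N}G_n=\dl\,G_n$ and the sequence is strict, Theorem~A(b) applies verbatim and yields that $(G,\cO)$ is complete. (Theorem~A(a) also re-confirms that each inclusion $G_n\to G$ is then a topological embedding, which is consistent with strictness being the hypothesis.) This completes the proof.
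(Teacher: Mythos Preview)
Your proposal has the right global architecture (BCH control with uniform radii, then \cite[Proposition~11.8]{JFA} and Theorem~A), but there is a genuine gap in the first step. Everything you establish about the iterated BCH products goes in the \emph{wrong direction} for the conclusion you want. You argue that the partial products $x_1\ast\cdots\ast x_n$ stay inside a fixed ball $\wb{B}^{\|.\|_n}_{2\sum_j r_j}(0)$ (and even form a Cauchy sequence as $n\to\infty$). This is an \emph{upper} bound on the set $W_n:=\{x_1\ast\cdots\ast x_n: x_j\in B^{\cg_j}_{r_j}(0)\}$. But to conclude that $\bigcup_n V_1\cdots V_n$ \emph{contains} $\exp_G$ of a $0$-neighbourhood in $\cg$, you need a \emph{lower} bound: you must show that $\bigcup_n W_n$ contains an open set of the form $\bigcup_n(B^{\cg_1}_{\rho_1}(0)+\cdots+B^{\cg_n}_{\rho_n}(0))$, since these are the basic $0$-neighbourhoods in the locally convex direct limit $\cg=\dl\,\cg_n$. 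Your estimate $\|x\ast y\|\leq\|x\|+\|y\|+(\text{higher order})$ says nothing about which elements of $\cg_n$ can be \emph{reached} as iterated BCH products; nor does the Cauchy/convergence discussion, which is in fact irrelevant (no infinite products are needed).

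What is missing is a surjectivity-type ingredient for the left translations $y\mapsto x\ast y$. The paper obtains this from a quantitative inverse function theorem applied to BCH multiplication (Lemma~\ref{BCHballs}), yielding
\[
x\ast y + B^{\cg}_{r/2}(0)\ \sub\ x\ast B^{\cg}_r(y)\ \sub\ x\ast y + B^{\cg}_{3r/2}(0)
\]
uniformly in all $\cg_n$. The left inclusion gives inductively $W_n\supseteq \sum_{k=1}^n B^{\cg_k}_{r_k/2}(0)$, so $\bigcup_n W_n$ contains an open convex $0$-neighbourhood in $\cg$; the right inclusion is what keeps the recursion well-defined (your upper bound plays this auxiliary role only). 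Without some analogue of the left inclusion, your argument does not prove that product sets are large. Once you add this, the remaining steps ($\cO=\cO_{TG}$ via \cite[Proposition~11.8]{JFA}; completeness via Theorem~A together with completeness of Banach--Lie groups) are exactly as you say.
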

Before we prove Proposition~\ref{arelarge},
let us compile useful facts concerning the Baker-Campbell-Hausdorff (BCH-)
multiplication.
\begin{numba} (See \cite{Bou}).
Let $\cg$ be a Banach-Lie algebra and $\|.\|$ be a norm on~$\cg$ which is \emph{compatible}
in the sense that it defines the topology of~$\cg$ and $\|[x,y]\|\leq\|x\|\,\|y\|$ holds
for all $x,y\in \cg$.
Then the BCH-series converges for $x,y\in\cg$ with $\|x\|+\|y\|<\ln\frac{3}{2}$
and defines an analytic function
\[
\{(x,y)\in\cg\times\cg\colon \|x\|+\|y\|<\ln(3/2)\}\to B^\cg_{\ln 2}(0),\quad (x,y)\mto x*y.
\]
If $\cg=L(G)$ for some Banach-Lie group~$G$, then
\begin{equation}\label{expBCH}
\exp_G(x*y)=\exp_G(x)\exp_G(y)\;\mbox{for all $x,y\in\cg$ with $\|x\|+\|y\|<\ln\frac{3}{2}$.}
\end{equation}
\end{numba}
See \cite[Lemma~3.5\,(a)]{Dah} for the following estimates concerning
derivatives of the BCH-multiplication:
\begin{la}\label{BCH}
There exists $s_0\in \;]0,\frac{1}{3}\ln\frac{3}{2}[$
such that,
for each Banach-Lie\linebreak
algebra $\cg$ and compatible norm $\|.\|$ on~$\cg$,
\begin{equation}\label{derBC}
(\forall x,y\in B^\cg_{s_0}(0))\;\; \|(\mu^\cg)'(x,y)-\alpha^\cg \|_{op}\leq\frac{1}{2},
\end{equation}
where
$\alpha^\cg\colon\cg\times\cg\to\cg$, $(x,y)\mto x+y$
is addition
and
$\mu^\cg\colon B^\cg_{s_0}(0)\times B^\cg_{s_0}(0)\to\cg$, $(x,y)\mto x*y$
the BCH-multiplication.\Punkt
\end{la}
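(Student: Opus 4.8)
The plan is to write the BCH--multiplication as $\mu^\cg=\sum_{n\ge 1}H_n$, where $H_n\colon\cg\times\cg\to\cg$ is the universal homogeneous Lie polynomial of degree~$n$ occurring in the Hausdorff series, with $H_1(x,y)=x+y=\alpha^\cg(x,y)$. Then $\beta:=\mu^\cg-\alpha^\cg=\sum_{n\ge 2}H_n$ has no terms of degrees~$0$ or~$1$. Since $\alpha^\cg$ is linear, its derivative is $\alpha^\cg$ itself, so $(\mu^\cg)'(x,y)-\alpha^\cg=\beta'(x,y)$ and it suffices to bound $\|\beta'(x,y)\|_{op}$. The genuine issue is \emph{uniformity}: the estimate must hold with a single $s_0$ for \emph{all} Banach--Lie algebras $\cg$ simultaneously, so mere continuity of $\beta'$ (which gives $\beta'(0,0)=0$ in each fixed $\cg$) is not enough — what is needed is a universal majorant for the Hausdorff series.

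Writing $r:=\max\{\|x\|,\|y\|\}$, the key claim is that there are universal constants $C_n\ge 0$, independent of $\cg$, with $\|H_n(x,y)\|\le C_n r^n$ and $\sum_{n\ge 1}C_n s^n<\infty$ for $s$ in some interval $[0,R)$ with $R>0$ universal. Directly: each $H_n=\sum_w c_w\,w$ is a fixed $\Q$-linear combination of bracket monomials $w$ of degree~$n$ in two letters; substituting $x,y$ and using compatibility $\|[u,v]\|\le\|u\|\,\|v\|$ repeatedly, every $w(x,y)$ has norm $\le r^n$, so $\|H_n(x,y)\|\le C_n r^n$ with $C_n:=\sum_w|c_w|$ universal. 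Convergence of $\sum_n C_n s^n$ on a universal interval is the absolute (majorant) form of the BCH convergence estimate underlying the stated radius $\ln\frac32$, and is the standard input (cf.~\cite{Bou}). As a self-contained alternative using only the data in the excerpt, one may bound the coefficients by a Cauchy estimate: from $\|x*y\|<\ln 2$ and $\|x+y\|<\ln\frac32$ one gets $\|\beta(x,y)\|\le\ln 3$ on all of $\{\|x\|+\|y\|<\ln\frac32\}$, and for fixed $(x,y)$ the slice $\zeta\mapsto\beta(\zeta x,\zeta y)=\sum_{n\ge2}\zeta^n H_n(x,y)$ (complexifying $\cg$ if $\K=\R$) is analytic and bounded by $\ln 3$ on $|\zeta|<\ln\frac32/(2r)$, whence Cauchy's estimates and homogeneity give $\|H_n(x,y)\|\le\ln 3\,(2/\ln\tfrac32)^n r^n$, so $R\ge\tfrac12\ln\frac32$.

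For the derivative, differentiating $H_n=\sum_w c_w w$ by the Leibniz rule replaces, in each monomial, one of the $n$ factors (an $x$ or a $y$) by the increment (an $h$ or a $k$); using compatibility, each of the resulting $n$ bracket monomials has norm $\le\max\{\|h\|,\|k\|\}\,r^{n-1}$, so $\|H_n'(x,y)\|_{op}\le n\,C_n\,r^{n-1}$ (operator norm for the max-norm on $\cg\times\cg$). Summing, and justifying termwise differentiation by the uniform convergence of the majorized series $\sum_{n\ge2}H_n'$ on a ball, we obtain
\[
\|\beta'(x,y)\|_{op}\ \le\ M(r):=\sum_{n\ge2} n\,C_n\,r^{n-1},
\]
which converges for $r<R$ and satisfies $M(0)=0$, its lowest term being $2C_2 r$. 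Since $M$ is continuous on $[0,R)$ with $M(0)=0$, I fix a universal $s_0\in\,]0,\tfrac13\ln\tfrac32[\,$ with $s_0<R$ and $M(s_0)\le\tfrac12$. Then for $x,y\in B^\cg_{s_0}(0)$ we have $r<s_0$, so $\|(\mu^\cg)'(x,y)-\alpha^\cg\|_{op}=\|\beta'(x,y)\|_{op}\le M(r)\le M(s_0)\le\tfrac12$, and $s_0,M$ depend only on the universal BCH data.

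The main obstacle is the universal majorant: producing constants $C_n$ with a positive radius of convergence that is uniform over all Banach--Lie algebras. This is exactly where the compatibility hypothesis $\|[u,v]\|\le\|u\|\,\|v\|$ and the universality of the Lie polynomials $H_n$ are indispensable; once the scalar series $\sum_n C_n s^n$ is known to converge near~$0$ with a $\cg$-independent radius, the remaining steps are routine, and the decisive smallness of $M$ near~$0$ stems entirely from $\beta$ having no terms of degree below~$2$. (If one prefers the Cauchy route for the coefficients, the derivative bound there follows from the standard homogeneous-polynomial estimate, whose polarization factor $n^n/n!\le e^n$ only shrinks the universal radius by a constant and is thus harmless.)
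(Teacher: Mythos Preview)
Your argument is correct. Note, however, that the paper does not actually prove this lemma: it is stated with a terminal $\Box$ immediately after citing \cite[Lemma~3.5\,(a)]{Dah}, so the paper's ``proof'' is simply a reference to Dahmen's article. What you have written is therefore not a comparison case but a genuine filling-in of a result the paper imports.

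Your route---expand $\mu^\cg-\alpha^\cg=\sum_{n\ge 2}H_n$ with the $H_n$ universal Lie polynomials, bound each bracket monomial via the compatibility inequality to get $\|H_n(x,y)\|\le C_n r^n$ with $C_n$ independent of~$\cg$, differentiate termwise using Leibniz to obtain $\|H_n'(x,y)\|_{op}\le nC_n r^{n-1}$, and then choose $s_0$ so that $M(s_0)=\sum_{n\ge 2}nC_n s_0^{n-1}\le\tfrac12$---is exactly the natural argument and is sound. The crucial observation, which you identify correctly, is that uniformity in $\cg$ comes for free because the $H_n$ are fixed rational linear combinations of bracket words and compatibility reduces every word to $r^n$. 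The one place where you lean on outside input is the finiteness of the majorant radius $R$ for $\sum C_n s^n$; your appeal to the standard absolute-convergence estimates for the Hausdorff series (as in \cite{Bou}) is appropriate here, and your Cauchy-estimate alternative is a legitimate fallback, though the complexification step would deserve one more line in a polished write-up (one needs a compatible norm on $\cg_\C$, which can be arranged up to a harmless constant).
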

To calculate the operator norm, the maximum norm was used on
$\cg\times\cg$ here.\\[2.3mm]
With $s_0$ and notation as in Lemma~\ref{BCH},
we deduce:
\begin{la}\label{BCHballs}
For each Banach-Lie algebra~$\cg$ and compatible norm $\|.\|$ on $\cg$,
we have
\begin{equation}\label{doesjob}
x*y+B^\cg_{r/2}(0)\sub x*B_r^\cg(y)\sub x*y+B^\cg_{3r/2}(0)
\end{equation}
for all $x\in B^\cg_{s_0}(0)$,
$y\in B^\cg_{s_0/2}(0)$ and $r\in\,]0,\frac{s_0}{2}]$.
\end{la}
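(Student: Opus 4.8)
The plan is to fix $x\in B^\cg_{s_0}(0)$, $y\in B^\cg_{s_0/2}(0)$ and $r\in\,]0,\tfrac{s_0}{2}]$ and to study the map $f\colon B^\cg_r(y)\to\cg$, $z\mto x*z=\mu^\cg(x,z)$. First I would check that $f$ is well defined and analytic: for $z\in B^\cg_r(y)$ we have $\|z\|\le\|z-y\|+\|y\|<r+\tfrac{s_0}{2}\le s_0$, so $(x,z)$ lies in the domain $B^\cg_{s_0}(0)\times B^\cg_{s_0}(0)$ of $\mu^\cg$ provided by Lemma~\ref{BCH}. Applying the estimate~(\ref{derBC}) at the point $(x,z)$ to arguments of the form $(0,w)$ and using $\alpha^\cg(0,w)=w$, I obtain $\|f'(z)-\id_\cg\|_{op}\le\tfrac12$ for every $z\in B^\cg_r(y)$ (here $f'(z)$ is the partial derivative of $\mu^\cg$ in its second slot at $(x,z)$). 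Writing $h:=f-\id_\cg$, this gives $\|h'(z)\|_{op}\le\tfrac12$ on the convex open set $B^\cg_r(y)$, hence $h$ is $\tfrac12$-Lipschitz there, while $\|f'(z)\|_{op}\le\tfrac32$.

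The right-hand inclusion is then immediate: by the mean value estimate of~\ref{C1lip}\,(b) applied to $f$ on the segment from $y$ to a given $z\in B^\cg_r(y)$ (which stays in the ball, by convexity) we get $\|f(z)-f(y)\|\le\tfrac32\|z-y\|<\tfrac32 r$, so $x*z\in x*y+B^\cg_{3r/2}(0)$; taking the union over $z\in B^\cg_r(y)$ yields $x*B^\cg_r(y)\sub x*y+B^\cg_{3r/2}(0)$.

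For the left-hand inclusion $x*y+B^\cg_{r/2}(0)\sub x*B^\cg_r(y)$ I would use the Banach fixed point theorem. Given $w\in\cg$ with $\|w\|<\tfrac{r}{2}$, solving $x*z=x*y+w$ is equivalent to finding a fixed point of $T(z):=y+w-(h(z)-h(y))$. Choosing $s$ with $\|w\|\le s<\tfrac{r}{2}$, one checks that for $z,z'\in\wb B^\cg_{2s}(y)$,
\[
\|T(z)-y\|\le\|w\|+\tfrac12\|z-y\|\le 2s,\qquad \|T(z)-T(z')\|=\|h(z')-h(z)\|\le\tfrac12\|z-z'\|,
\]
and $\wb B^\cg_{2s}(y)\sub B^\cg_r(y)$ because $2s<r$. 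Thus $T$ is a $\tfrac12$-contraction of the complete metric space $\wb B^\cg_{2s}(y)$ into itself and has a fixed point $z^{\ast}\in B^\cg_r(y)$ with $x*z^{\ast}=x*y+w$; together with the trivial case $w=0$ (take $z^{\ast}=y$) this proves the inclusion.

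The one place where care is needed is exactly the bookkeeping of radii that keeps the fixed-point ball $\wb B^\cg_{2s}(y)$ inside the open ball $B^\cg_r(y)$ on which the uniform derivative bound $\|f'-\id_\cg\|_{op}\le\tfrac12$ is valid; this is what dictates the hypotheses $y\in B^\cg_{s_0/2}(0)$ and $r\le\tfrac{s_0}{2}$. Everything else is a routine combination of the fundamental theorem of calculus and the contraction mapping theorem, and all estimates are uniform over Banach-Lie algebras $\cg$ since $s_0$ from Lemma~\ref{BCH} is a universal constant.
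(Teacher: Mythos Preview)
Your proof is correct and follows essentially the same route as the paper: both exploit that $\mu^\cg_x-\id_\cg$ is $\tfrac12$-Lipschitz (from the derivative bound~(\ref{derBC})) and then invoke a Lipschitz inverse-function argument. The only difference is cosmetic: the paper packages the two inclusions into a single citation of the Quantitative Inverse Function Theorem \cite[Lemma~6.1\,(a)]{IMP} (applied to $\mu^\cg_x$ with $A=\id_\cg$), whereas you unpack that result by hand via the Banach fixed-point theorem.
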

\begin{proof}
Setting $R(x,y):=\mu^\cg(x,y)-x-y$, we have
\[
\mu^\cg(x,y)=x+y+R(x,y)\quad\mbox{for all $x,y\in B^\cg_{s_0}(0)$.}
\]
Since $\| R'(x,y)\|_{op}\leq\frac{1}{2}$ for all $(x,y)\in B^\cg_{s_0}(0)\times B^\cg_{s_0}(0)$
by (\ref{derBC})
and the latter set is convex, \ref{C1lip}\,(b)
shows that
$\Lip(R)\leq\frac{1}{2}$.
For $x\in B^\cg_{s_0}(0)$, consider the map
$\mu^\cg_x\colon B^\cg_{s_0}(0)\to\cg$, $y\mto \mu^\cg(x,y)$.
For all $y,z\in B^\cg_{s_0}(0)$, we have
\begin{eqnarray*}
\|\mu^\cg_x(z)-\mu^\cg_x(y)-\id_\cg(z-y)\|
&=&\|\mu^\cg(x,z)-(x+z)-\mu^\cg(x,y)+x+y\|\\
&=&\|R(x,z)-R(x,y)\|\leq\Lip(R)\|z-y\|
\end{eqnarray*}
and thus $\Lip(\mu^\cg_x-\id_\cg)\leq\Lip(R)\leq\frac{1}{2}$.
Applying now the Quantitative Inverse Function Theorem
\cite[Lemma~6.1\,(a)]{IMP} (or the version in \cite{Wel})
to the function $\mu^\cg_x$ with $A:=\id_\cg$, we get~(\ref{doesjob}).
\end{proof}
{\bf Proof of Proposition~\ref{arelarge}.}
To see that product sets are large in $G=\bigcup_{n\in\N} G_n$,
let $(U_n)_{n\in\N}$ be a sequence of identity neighbourhoods $U_n\sub G_n$.
By hypothesis,
\begin{equation}\label{hypcontr}
\|x\|_m\leq \|x\|_k\quad\mbox{for all integers $1\leq k\leq m$ and all $x\in\cg_k$.}
\end{equation}
Let $s_0$ be as in Lemma~\ref{BCH}.
For $n\in\N$, choose
\begin{equation}\label{ther}
r_n\in \,]0,s_0/2^{n+1}[
\end{equation}
so small that
\begin{equation}\label{inside}
V_n:=\exp_{G_n}(B^{\cg_n}_{r_n}(0))\sub U_n.
\end{equation}
Write $x *_n y:=\mu^{\cg_n}(x,y)$ for the BCH-multiplication, for $x,y\in B^{\cg_n}_{s_0}(0)$
(as in Lemma~\ref{BCH}).
Define
$W_1:=B^{\cg_1}_{r_1}(0)$.
We claim that
\begin{equation}\label{wantdefine}
W_n:=W_{n-1} *_n B^{\cg_n}_{r_n}(0)
\end{equation}
can be defined for each integer $n\geq 2$, and
\begin{equation}\label{enabdfn}
\sum_{k=1}^n B^{\cg_k}_{r_k/2}(0)\sub W_n\sub \sum_{k=1}^n B^{\cg_k}_{3r_k/2}(0).
\end{equation}
If the claim is true, then $W:=\bigcup_{n\in\N} W_n$ is a $0$-neighbourhood in $\cg$, as
it contains the convex set
\[
S:=\bigcup_{n\in\N} (B^{\cg_1}_{r_1/2}(0)+\cdots+ B^{\cg_n}_{r_n/2}(0))
\]
which is an open $0$-neighbourhood in the locally convex direct limit
$\cg=\bigcup_{n\in\N}\cg_n$ as it intersects each $\cg_n$ in an open $0$-neighbourhood.
Since $\exp_G(W)$ contains the open subset $\exp_G(S\cap Q)$ of~$G$
(with $Q$ as in \ref{Dahsett}), we deduce that $\exp_G(W)$ is an identity
neighbourhood in~$G$. Now
\[
\exp_G(W_n)=V_1V_2\cdots V_n
\]
for each $n\in \N$; this is trivial if $n=1$ and follows inductively
as
\begin{eqnarray*}
\exp_G(W_n) &=&\exp_{G_n}(W_n)=\exp_{G_n}(W_{n-1} *_n B^{\cg_n}_{r_n}(0))\\
&=&\exp_{G_n}(W_{n-1})\exp_{G_n}(B^{\cg_n}_{r_n}(0))
=\exp_{G_{n-1}}(W_{n-1})V_n\\
&=&V_1\cdots V_{n-1}V_n,
\end{eqnarray*}
using (\ref{expBCH}), the definition of~$V_n$, and the inductive hypothesis.
Thus
\[
U:=\bigcup_{n\in\N} U_1\cdots U_n\supseteq\bigcup_{n\in\N}V_1\cdots V_n=
\bigcup_{n\in\N}\exp_G(W_n)=\exp_G(W),\vspace{-1mm}
\]
whence $U$
is an identity neighbourhood in~$G$ and so product sets are large.\\[2.3mm]
We now prove the claim, by induction.
For $n=2$, we can form
$W_2:=W_1*_2 B^{\cg_2}_{r_2}(0)$
as $W_1=B^{\cg_1}_{r_1}(0)\sub B^{\cg_1}_{s_0}(0)\sub B^{\cg_2}_{s_0}(0)$
by (\ref{hypcontr}), and $B^{\cg_2}_{r_2}(0)\sub B^{\cg_2}_{s_0}(0)$.
Moreover, as $r_2\leq s_0/2$, we have
$W_1+B^{\cg_2}_{r_2/2}(0)\sub W_2\sub W_1+ B^{\cg_2}_{3r_2/2}(0)$
by (\ref{doesjob}). Hence
\[
B^{\cg_1}_{r_1/2}(0)+B^{\cg_2}_{r_2/2}(0)\sub W_2\sub B^{\cg_1}_{3r_1/2}(0)
+ B^{\cg_2}_{3r_2/2}(0)
\]
a fortiori. For the induction step, assume that $n\geq 2$ and that $W_1,\ldots,W_n$ have already been defined such that (\ref{enabdfn})
holds with $k\in\{1,\ldots,n\}$ in place of~$n$.
In particular, (\ref{enabdfn}) holds for $n$
and its right hand side is a subset of
\[
\sum_{k=1}^n B^{\cg_k}_{s_0/2^k}(0)\sub
\sum_{k=1}^n B^{\cg_{n+1}}_{s_0/2^k}(0)\sub B^{\cg_{n+1}}_{s_0}(0),
\]
using (\ref{hypcontr}) for the first inclusion.
Thus $W_n\sub B^{\cg_{n+1}}_{s_0}(0)$ and
since $r_{n+1}\leq s_0$, we deduce that $W_{n+1}:=W_n *_{n+1} B^{\cg_{n+1}}_{r_{n+1}}(0)$
can be defined. Moreover,
\[
W_n+ B^{\cg_{n+1}}_{r_{n+1}/2}(0)\sub W_{n+1}\sub W_n+ B^{\cg_{n+1}}_{3r_{n+1}/2}(0),
\]
by (\ref{doesjob}). Using (\ref{enabdfn}), we obtain
$\sum_{k=1}^{n+1} B^{\cg_k}_{r_k/2}(0)\sub W_{n+1}\sub \sum_{k=1}^{n+1} B^{\cg_k}_{3r_k/2}(0)$,
which completes the inductive proof of the claim.\\[2.3mm]
As product sets are large in $G=\bigcup_{n\in\N}G_n$ by the preceding,
the last and penultimate assertion of the proposition follow
from \cite[Proposition~11.8]{JFA} and Theorem~A, respectively.\Punkt
{\small
{\bf Helge  Gl\"{o}ckner}, Institut f\"{u}r Mathematik, Universit\"at Paderborn,\\
Warburger Str.\ 100, 33098 Paderborn, Germany; {\tt  glockner@math.upb.de}}\vfill
\end{document}